\newtheorem{theor}{Theorem}[section]{\bf}{\it}
\newtheorem{lem}[theor]{Lemma}{\bf}{\it}
\newtheorem{cor}[theor]{Corollary}{\bf}{\it}
\newtheorem{rem}{Remark}[section]{\bf}{\rm}
\newtheorem{exam}{Example}[section]{\bf}{\rm}
{\bf}{\rm}
\numberwithin{equation}{section}
\numberwithin{theor}{section}
\numberwithin{rem}{section}
\begin{document}
\title[]{Some measure-theoretic properties of U-statistics applied in statistical physics}
\author{}
\address{}
\email{}
\thanks{}
\author{}
\curraddr{}
\email{}
\thanks{}
\subjclass[2010]{28A20, 28A25, 28A35, 62H12, 82B21.}
\keywords{U-statistics, measurability, a.e. convergence, integrability, inverse problem in statistical mechanics.}
\thanks{}

\begin{abstract}
This paper investigates the relationship between various measure-theoretic properties of U-statistics with fixed sample size $N$ and the same properties of their kernels.  Specifically, the random variables are replaced with elements in some 
measure space $(\Lambda; dx)$, the resultant real-valued functions on $\Lambda^N$ being called generalized $N$-means.  It is shown that  a.e. convergence of sequences, measurability, essential boundedness and, under certain conditions, integrability with respect to probability measures of generalized $N$-means and their kernels are equivalent.  These results are crucial for the solution of the inverse problem in classical statistical mechanics in the canonical formulation.  
\end{abstract}

\maketitle

\section{Introduction\label{intro}}

%(Please note that this introduction has to be improved.)
 
Let $(\Lambda; dx)$ be a complete $\sigma $-finite measure space with non-zero measure $dx$, and let  
$d^{k}x$ be the completion of the product measure 
$dx^{\otimes k}$ on $\Lambda^{k}$ for $k\in\Bbb{N}$.  
If $1\leq m\leq N$ are integers, and $u:\Lambda^{m}\rightarrow \Bbb{R}$ is a function, the generalized 
$N$-mean of order $m$ with kernel $u$ is defined in this paper as  
\begin{equation}
\label{GenMean_def}
\left(G_{N,m}u\right)(x_1,...,x_N)=
{\binom{N}{m}}^{-1}
\sum_{1\leq i_{1}<\cdots <i_{m}\leq N}
u(x_{i_{1}},...,x_{i_{m}}).  
\end{equation}
In the following, we investigate whether various measure-theoretic properties of the kernels (such as a.e. convergence of sequences, measurability, essential boundedness, and integrability with respect to probability 
measures) can be deduced from the analogous properties 
of the generalized means, and vice versa.  
%The (positive) answers to these enquires are 
%crucial for the validity of the inverse conjecture  
Such questions arise in proving the inverse conjecture of statistical physics in the canonical formulation \cite{ChChLi84,Navr15}.  The inverse hypothesis states that 
there is a unique kernel $u$ such that the potential of the form (\ref{GenMean_def}) (traditionally, without the scaling factor) produces a given $m$-particle density.  Technically, the kernel $u$ is sought as a minimizer of 
a relative entropy functional.  % \cite{ChChLi84,Navr15}.  
(Strictly speaking, a maximizer of its negative.)  In the canonical formulation, when $m\geq 2$, the a.e. convergence and integrability are first proved for a sequence 
of generalized $N$-means and its a.e. limit, respectively \cite{Navr15}.  Hence the need for the above mentioned equivalence. 

%The inverse conjecture is the affirmative answer to this question.   %The existence and uniqueness of $u$ when $m=1$ lies at the foundation of density functional theory (DFT) for inhomogeneous fluids 
When $m=1$, the inverse conjecture lies at the foundation of density functional theory (DFT) of  inhomogeneous fluids \cite{Evans79}.  Its validity for $m\geq 2$, combined with the DFT approach,  leads to new results in liquid state theory. (This will be explored in a later paper.)  The inverse conjecture for $m\geq 2$ is also customarily assumed in coarse-grained modeling \cite{Noid13,LyLa95}.  
  
Even though the questions addressed in this paper originated in statistical physics, the results may have broader value.  Incidentally, if 
$(x_1,...,x_N)$ are replaced  with random variables, the generalized $N$-mean in (\ref{GenMean_def}) becomes a U-statistic.  U-statistics were introduced by Hoeffding as unbiased estimators of regular functionals \cite{Hoeffding48}.  Since then, they have been extensively 
studied, and numerous applications have been found for them \cite{Koroljuk94, Lee90}.  In common statistical usage, the kernel $u$ is given, and the limit properties of U-statistics are studied as sampling size $N$ goes to infinity.  These issues are different from the ones dealt with in the inverse problem.  Nevertheless, it should be expected that some applications require a measure-theoretic setting in which the properties of U-statistics provided here are useful.   

The paper is organized as follows.  In Section \ref{sec_a.e.conv}, it is proved that a sequence of generalized 
$N$-means converges a.e. on $\Lambda^N$ if and only if 
the corresponding sequence of their kernels converges 
a.e. on $\Lambda^m$ (Theorem \ref{Theor1}).  Despite its apparent simplicity, the 
"only if" part of this statement is not easy to verify.  The difficulty lies in the fact that convergence holds only a.e. on 
$\Lambda^N$.  This can be illustrated on a simple example.     
Suppose that $G_{N,1} u_n\rightarrow U$ everywhere on 
$\Lambda^N$.  Then, for every $x\in\Lambda$, $u_n(x)\rightarrow U(x,...,x)$.  However, the same approach cannot 
be used if the convergence holds only a.e. because the diagonal may be (and often is) a set of measure zero.

The equivalence of measurability and essential boundedness of generalized means and their kernels is established in Theorem \ref{Theor2}.  Section 
\ref{sec_integ} is 
concerned with integrability issues.  The general problem is 
as follows.  A symmetric probability density $P$ on 
$\Lambda^N$ induces a marginal symmetric probability 
density $\rho^{(m)}$ on $\Lambda^m$ upon integrating $P$ 
with respect to any set of $N-m$ variables.  If $1\leq m<N$  and $1\leq r<\infty$, is it true that a generalized $N$-mean 
of order $m$ is in $L^{r}(\Lambda^N; Pd^{N}x)$ if and only if 
its kernel is in $L^{r}(\Lambda^m; \rho^{(m)}d^{m}x)$?  While the "if" part of this question is easy to verify, the "only if" part does not hold in general (Example \ref{Exam_int}).  However, an extra condition on $P$ given in  Theorem \ref{Theor3} ensures that the answer to the above question is positive.  This condition holds in some arbitrarily 
small perturbations in $L^{1}(\Lambda^{N}; d^{N}x)$ (and in 
$L^{\infty}(\Lambda^{N}; d^{N}x)$, if measure $dx$ is finite, and $P$ is essentially bounded) of any symmetric probability density (Theorem \ref{prob_est}).    
 
We will finish this section by introducing some terminology that will be used throughout the paper.   

Subsets of $d^{k} x$ measure zero will be called \textit{null} 
sets, and their complements \textit{co-null}.  The wording "a.e.," "null set," "co-null set" will always be understood relative to the measure $d^{k}x$, with $k$ obvious from the context, and the same is true regarding the measurability of functions.  
If a set $E\subset \Lambda^k$ is measurable, its measure  
will be denoted $\left | E \right | $.  

We will also use the following definitions.  Let $1\leq m < N$ be integers.  Then for any  
$(x_{m+1},...,x_{N})\in \Lambda^{N-m}$ and any $E\subset \Lambda^N$, the $(x_{m+1},...,x_{N})$-section of $E$ is
\begin{equation}
\label{sect_def}
E_{x_{m+1},...,x_{N}}:=\{(x_1,...,x_m)\in\Lambda^m : 
(x_1,...,x_N)\in E\}.
\end{equation}
%Similarly, if $v : \Lambda^N\rightarrow \Bbb{R}$ is a 
%function, then $v_{x_{m+1},...,x_{N}}:= v(\cdot, x_{m+1},...,x_N)$ is a function on $\Lambda^m$.  
To shorten the presentation, it is useful to define the operator $\hat{B}_{N,m}$, transforming  
a set $E_m\subset \Lambda^m$ into a set $\hat{B}_{N,m}
E_m\subset \Lambda^N$:  
\begin{equation}
\begin{split}
\label{hatB_def}
&\hat{B}_{N,m}E_{m}:=\\
&\{(x_{1},...,x_{N})\in \Lambda
^{N}:(x_{i_{1}},...,x_{i_{m}})\in E_{m}~\forall~1\leq i_{1}<\cdots <i_{m}\leq N\}.
\end{split}
\end{equation}
Finally, for ease of future argument, we need to extend the definition of $G_{N,m} u$ to the case where $0=m\leq N$, 
and $u\equiv c\in\Bbb{R}$.  In this case, we define $G_{N,0} u=c$.

It should be emphasized that, with the exception of Section 4, the kernel $u$ in (\ref{GenMean_def}) is not assumed to be symmetric, as is customarily done for U-statistics.  

%\begin{note}
%\rm{For ease of future argument, we need to extend the definition of $G_{N,m} u$ to the case where $0=m\leq N$, 
%and $u\equiv c\in\Bbb{R}$.  In this case, we define $G_{N,0} u=c$. } 
%\end{note}
 
\section{Almost everywhere convergence\label{sec_a.e.conv}}

In this section we address the question of whether the a.e. 
convergence of a sequence of generalized means $G_{N,m}u_n$ implies the a.e. convergence of their kernels $u_n$.  In particular, if this is true, then the a.e. limit of a sequence of generalized $N$-means is a generalized 
$N$-mean.    

The following two simple lemmas will be crucial for the 
development of all subsequent arguments.  
\begin{lem}
\label{Lem1}  Let $1\leq k\leq N$ be integers, and let $T_{k}$ be a co-null subset
of $\Lambda ^{k}.$  Then, the set $T_N=\hat{B}_{N,k} T_k$ 
is co-null in $\Lambda ^{N}.$
\end{lem}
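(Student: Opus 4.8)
The plan is to pass to complements and reduce the statement to the fact that a cylinder over a null set is null. Put $Z_{k}:=\Lambda^{k}\setminus T_{k}$, a null set by hypothesis, and for each index set $I=(i_{1},\dots,i_{k})$ with $1\leq i_{1}<\cdots<i_{k}\leq N$ let $\pi_{I}\colon\Lambda^{N}\to\Lambda^{k}$ be the coordinate projection $\pi_{I}(x_{1},\dots,x_{N})=(x_{i_{1}},\dots,x_{i_{k}})$. Unwinding (\ref{hatB_def}), a point $(x_{1},\dots,x_{N})$ fails to lie in $\hat{B}_{N,k}T_{k}$ precisely when $\pi_{I}(x_{1},\dots,x_{N})\in Z_{k}$ for at least one $I$; hence
\begin{equation*}
\Lambda^{N}\setminus T_{N}=\bigcup_{1\leq i_{1}<\cdots<i_{k}\leq N}\pi_{I}^{-1}(Z_{k}).
\end{equation*}
This is a union of $\binom{N}{k}$ sets, and a finite union of null sets is null, so it suffices to show that each $\pi_{I}^{-1}(Z_{k})$ is a null set in $\Lambda^{N}$.

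By the definition of the completion $d^{k}x$, choose a $dx^{\otimes k}$-measurable $\tilde{Z}_{k}\supseteq Z_{k}$ with $dx^{\otimes k}(\tilde{Z}_{k})=0$; then $\pi_{I}^{-1}(Z_{k})\subseteq\pi_{I}^{-1}(\tilde{Z}_{k})$, so it is enough to prove that $\pi_{I}^{-1}(\tilde{Z}_{k})$ has $dx^{\otimes N}$-measure zero. Since $(\Lambda;dx)$ is $\sigma$-finite, so are all finite products, and Tonelli's theorem applies: regrouping $dx^{\otimes N}$ as the product of the $k$ factors indexed by $I$ and the remaining $N-k$ factors, and integrating over the latter block first, we obtain $dx^{\otimes N}\bigl(\pi_{I}^{-1}(\tilde{Z}_{k})\bigr)=dx^{\otimes k}(\tilde{Z}_{k})\cdot\bigl|\Lambda^{N-k}\bigr|=0$, the product being zero even if $\bigl|\Lambda^{N-k}\bigr|=\infty$. (If one wishes to avoid the convention $0\cdot\infty=0$, exhaust $\Lambda$ by sets of finite measure and use continuity of the measure from below, so that all intermediate quantities remain finite.) Thus $\pi_{I}^{-1}(Z_{k})$ is null in $\Lambda^{N}$.

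Putting the pieces together, $\Lambda^{N}\setminus T_{N}$ is contained in the finite union $\bigcup_{I}\pi_{I}^{-1}(\tilde{Z}_{k})$ of $dx^{\otimes N}$-null sets, hence is itself a null set with respect to $d^{N}x$; equivalently, $T_{N}=\hat{B}_{N,k}T_{k}$ is co-null in $\Lambda^{N}$. The only delicate points are the bookkeeping with completions and checking that Tonelli's theorem is available --- which is exactly where $\sigma$-finiteness is used, and which also dispels any $0\cdot\infty$ ambiguity; once the complement is written as the displayed union, the rest is immediate. I do not anticipate a genuine obstacle here: this lemma is intended only as the elementary base on which the harder ``only if'' direction of Theorem \ref{Theor1} will be built.
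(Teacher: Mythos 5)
Your proof is correct and follows essentially the same route as the paper: the complement of $T_N$ is decomposed over the $\binom{N}{k}$ index sets, and each piece (your $\pi_I^{-1}(Z_k)$, the paper's $\Lambda^N\setminus T_{i_1,\dots,i_k}$) is identified with a cylinder over the null set $\Lambda^k\setminus T_k$, hence null. The extra care you take with the completion, Tonelli, and the $0\cdot\infty$ convention simply fills in details the paper leaves implicit.
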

\begin{proof}
\begin{equation}
T_N=\bigcap_{1\leq i_{1}<\cdots <i_{k}\leq N} T_{i_{1},...,i_{k}},
\end{equation}
where 
$T_{i_{1},...,i_{k}}:=\{(x_{1},...,x_{N})\in \Lambda^{N}:(x_{i_{1}},...,x_{i_{k}})\in T_{k}\}$.  Therefore,
\begin{equation}
\left | \Lambda^N\setminus T_N\right |\leq 
\sum_{1\leq i_{1}<\cdots <i_{k}\leq N} 
\left | \Lambda^N \setminus T_{i_{1},...,i_{k}}\right |=\binom{N}{k} \left |\left (\Lambda^k\setminus T_k \right )\otimes 
\Lambda^{N-k} \right |=0.
%\left |\Lambda\right |^{N-k}=0.
\end{equation}
\end{proof}

\begin{lem}
\label{Lem2}
For any integers $0\leq k\leq m\leq N$, $G_{N,m} G_{m,k}=
G_{N,k}$.  
\end{lem}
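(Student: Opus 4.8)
The plan is to compute both sides directly from the definition \eqref{GenMean_def} and to reduce the identity to the elementary ``subset of a subset'' relation $\binom{N}{m}\binom{m}{k}=\binom{N}{k}\binom{N-k}{m-k}$. I would first dispose of the degenerate cases: if $k=0$, then $u\equiv c$, so $G_{m,0}u=c$ and, since $G_{N,m}$ averages the single value $c$ over all $\binom{N}{m}$ index sets, $G_{N,m}G_{m,0}u=c=G_{N,0}u$ (the subcase $m=0$ being trivial). So from now on assume $1\leq k\leq m\leq N$.

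Next I would unfold the composition. For $(x_1,\ldots,x_N)\in\Lambda^N$,
\begin{equation*}
\bigl(G_{N,m}G_{m,k}u\bigr)(x_1,\ldots,x_N)=\binom{N}{m}^{-1}\binom{m}{k}^{-1}\sum_{1\leq i_1<\cdots<i_m\leq N}\;\sum_{1\leq j_1<\cdots<j_k\leq m}u\bigl(x_{i_{j_1}},\ldots,x_{i_{j_k}}\bigr).
\end{equation*}
The inner double sum runs over all pairs $(S,T)$ of subsets of $\{1,\ldots,N\}$ with $T\subset S$, $|T|=k$, $|S|=m$, the summand being $u$ evaluated at the coordinates indexed by $T$ listed in increasing order; here it is important (since $u$ need not be symmetric) that $i_1<\cdots<i_m$ together with $j_1<\cdots<j_k$ forces $i_{j_1}<\cdots<i_{j_k}$, so the ordering of the arguments is unambiguous. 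Grouping the double sum by the $k$-subset $T$ and using that the number of $m$-subsets $S$ with $T\subset S\subset\{1,\ldots,N\}$ is $\binom{N-k}{m-k}$, it collapses to $\binom{N-k}{m-k}\sum_{1\leq l_1<\cdots<l_k\leq N}u(x_{l_1},\ldots,x_{l_k})=\binom{N-k}{m-k}\binom{N}{k}\bigl(G_{N,k}u\bigr)(x_1,\ldots,x_N)$.

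Finally I would invoke $\binom{N}{m}\binom{m}{k}=\binom{N}{k}\binom{N-k}{m-k}$ (both sides count the pairs $T\subset S\subset\{1,\ldots,N\}$ with $|T|=k$, $|S|=m$) to see that the accumulated prefactor $\binom{N}{m}^{-1}\binom{m}{k}^{-1}\binom{N-k}{m-k}\binom{N}{k}$ equals $1$, which yields $G_{N,m}G_{m,k}u=G_{N,k}u$. The only mild obstacle is keeping the double sum bookkeeping straight — the combinatorial core is a one-line binomial identity — so I expect this lemma to be routine; its real content is that it will later let us move freely between generalized means of different orders.
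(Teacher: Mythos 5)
Your proof is correct and follows essentially the same route as the paper's: both handle the $k=0$ case by the definitional convention and then reduce the composed sum to the observation that each term $u(x_{i_1},\ldots,x_{i_k})$ occurs exactly $\binom{N-k}{m-k}$ times, with the binomial prefactors collapsing via $\binom{N}{m}\binom{m}{k}=\binom{N}{k}\binom{N-k}{m-k}$. Your explicit remark that the nested increasing orderings keep the arguments of the (non-symmetric) kernel in the right order is a point the paper leaves implicit, but the argument is the same.
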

\begin{proof}
If $0=k\leq m\leq N$, and $u\equiv c\in \Bbb{R}$ then,
$G_{N,0}u=c$ by definition.  On the other hand, 
$G_{N,m}G_{m,0}u=G_{N,m} c = c$. 

Suppose that $1\leq k\leq m\leq N$, and let $u:\Lambda^k\rightarrow \Bbb{R}$ be any function.  By a simple 
combinatorial argument 
\begin{equation}
\begin{split}
&\left(G_{N,m}G_{m,k} u\right )(x_1,...,x_N)=
\binom{N}{m}^{-1}\binom{m}{k}^{-1}\binom{N-k}{m-k}
\sum_{1\leq i_1<\cdots<i_k\leq N} u(x_{i_1},...,x_{i_k})=\\
&\binom{N}{k}^{-1}\sum_{1\leq i_1<\cdots<i_k\leq N} u(x_{i_1},...,x_{i_k})=G_{N,k}(x_1,...,x_N).
\end{split}
\end{equation}
\end{proof}
The first equality follows because for every $1\leq i_1<\cdots<i_k\leq N$, the term $u(x_{i_1},...,x_{i_k})$ appears 
exactly $\binom{N-k}{m-k}$ times.  
\begin{theor}
\label{Theor1}
Let $1\leq m\leq N$ be integers,and let $(u_n)$ be a 
sequence of finite functions on $\Lambda^m$.  Then the 
following is true.  

There is a finite function $U$ on $\Lambda^N$ such that 
$G_{N,m} u_n\rightarrow U$ a.e. if and only if there is 
a finite function $u$ on $\Lambda^m$ such that 
$u_n\rightarrow u$ a.e.  
\end{theor}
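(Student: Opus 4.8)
The plan is to prove the two implications separately, with the ``if'' direction being routine and the ``only if'' direction requiring the main work.

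\textbf{The easy direction ($u_n\to u$ a.e. $\Rightarrow$ $G_{N,m}u_n\to U$ a.e.).} Suppose $u_n\to u$ a.e. on $\Lambda^m$, say on a co-null set $T_m\subset\Lambda^m$. Set $U:=G_{N,m}u$, which is a finite function on $\Lambda^N$. By Lemma \ref{Lem1}, the set $T_N:=\hat B_{N,m}T_m$ is co-null in $\Lambda^N$. For any point $(x_1,\dots,x_N)\in T_N$, \emph{every} argument tuple $(x_{i_1},\dots,x_{i_m})$ with $1\le i_1<\cdots<i_m\le N$ lies in $T_m$, hence $u_n(x_{i_1},\dots,x_{i_m})\to u(x_{i_1},\dots,x_{i_m})$ for each such tuple. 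Since $G_{N,m}u_n$ is a finite average (over $\binom{N}{m}$ terms) of these, we get $(G_{N,m}u_n)(x_1,\dots,x_N)\to(G_{N,m}u)(x_1,\dots,x_N)=U(x_1,\dots,x_N)$ on all of $T_N$. This proves a.e.\ convergence.

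\textbf{The hard direction ($G_{N,m}u_n\to U$ a.e. $\Rightarrow$ $u_n\to u$ a.e.).} As the authors emphasize in the introduction, the obstruction is that convergence of $G_{N,m}u_n$ holds only off a null set $Z\subset\Lambda^N$, and one cannot simply restrict to the diagonal $\{x_1=\cdots=x_N\}$, which is typically itself null. The natural strategy is an inductive reduction on $N-m$: it suffices to handle the single step from $N$ down to $N-1$, i.e.\ to show that a.e.\ convergence of $G_{N,m}u_n$ forces a.e.\ convergence of $G_{N-1,m}u_n$; iterating this $N-m$ times brings us to $G_{m,m}u_n=u_n$ itself. For the one-step reduction I would exploit the permutation structure together with Fubini. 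Writing $G_{N,m}u_n(x_1,\dots,x_N)$ as an average over $m$-subsets of $\{1,\dots,N\}$, one can split the $\binom{N}{m}$ terms according to whether the index $N$ is included: this expresses $G_{N,m}u_n(x_1,\dots,x_N)$ as a convex combination of $G_{N-1,m}u_n(x_1,\dots,x_{N-1})$ and a term involving averages of $u_n$ over $(m-1)$-subsets of $\{1,\dots,N-1\}$ together with $x_N$. More robustly, one averages the convergence over the position of the ``extra'' variable: for a fixed co-null section structure, since $G_{N,m}u_n$ converges a.e.\ on $\Lambda^N\cong\Lambda^{N-1}\times\Lambda$, by Fubini there is a co-null set of $x_N\in\Lambda$ for which $G_{N,m}u_n(\cdot,x_N)$ converges a.e.\ on $\Lambda^{N-1}$; and by symmetry one gets the analogous statement with any one of the $N$ slots played by $\Lambda$. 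Picking two (or suitably many) such ``good'' values $a,b\in\Lambda$ and forming appropriate linear combinations of $G_{N,m}u_n(x_1,\dots,x_{N-1},a)$, $G_{N,m}u_n(x_1,\dots,x_{N-1},b)$, etc., the terms that genuinely depend on the extra slot can be algebraically eliminated (this is where Lemma \ref{Lem2}, relating $G_{N,m}$ and $G_{m,k}$, and the linear independence of the relevant averaging operators is used), leaving the a.e.\ convergence of $G_{N-1,m}u_n$ on $\Lambda^{N-1}$.

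\textbf{Main obstacle.} The crux is the one-step reduction $N\to N-1$: making precise the claim that finitely many ``good'' fibre values $x_N=a_1,\dots,a_j$ can be chosen so that suitable linear combinations of the $G_{N,m}u_n(x_1,\dots,x_{N-1},a_i)$ isolate $G_{N-1,m}u_n(x_1,\dots,x_{N-1})$, while simultaneously keeping all the exceptional sets co-null after intersecting over the finitely many choices (Lemma \ref{Lem1} guarantees $\hat B$-images stay co-null, which is what lets us intersect freely). One must check that the required linear algebra — eliminating the ``cross'' terms $\binom{m}{j}^{-1}\sum u_n(\cdot,a_{i_1},\dots)$ for $j<m$ — is actually solvable, which should follow because these contributions, viewed as functions of the auxiliary variables, span complementary spaces; the base case $m=N$ is trivial since then $G_{m,m}u_n=u_n$. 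Once the reduction is in place, the limit $u$ is obtained as the a.e.\ limit of $u_n=G_{m,m}u_n$, and feeding it back through the easy direction shows $U=G_{N,m}u$ a.e., closing the equivalence.
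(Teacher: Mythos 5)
Your ``if'' direction is fine and is the same as the paper's. The problem is the one-step reduction $N\to N-1$ on which your whole ``only if'' argument rests. Fixing a good fibre value $a$, the section identity is
$\binom{N}{m}\,(G_{N,m}u_n)(x_1,\dots,x_{N-1},a)=\binom{N-1}{m}\,(G_{N-1,m}u_n)(x_1,\dots,x_{N-1})+\binom{N-1}{m-1}\,\bigl(G_{N-1,m-1}v_n^{a}\bigr)(x_1,\dots,x_{N-1})$,
where $v_n^{a}:=u_n(\cdot,a)$ is a kernel of order $m-1$. The ``cross'' term you propose to eliminate is therefore not a known coefficient depending on $a$ multiplying a common unknown; it is a genuinely new unknown sequence of functions for each choice of $a$. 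A linear combination over finitely many good values $a_1,\dots,a_j$ produces $G_{N-1,m-1}$ applied to the corresponding combination of the $v_n^{a_i}$, which is exactly as unknown as before, so no finite ``linear independence of averaging operators'' argument can isolate $G_{N-1,m}u_n$; differencing two fibres only controls $G_{N-1,m-1}(v_n^{a}-v_n^{b})$, which gives convergence of neither cross term. Moreover your sketch offers no mechanism at all for proving that these lower-order contributions converge, and that is the actual crux of the theorem.

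For comparison, the paper inducts on the kernel order $m$, not on $N$: it fixes all $N-m$ extra variables at once, so the section becomes $u_n+G_{m,m-1}\omega_n$ with a single lower-order kernel $\omega_n$ on $\Lambda^{m-1}$; it then applies $G_{N,m}$ to this identity, uses Lemma \ref{Lem2} ($G_{N,m}G_{m,m-1}=G_{N,m-1}$) and the hypothesis $G_{N,m}u_n\to U$ on $E$ to conclude that $G_{N,m-1}\omega_n$ converges a.e.; the induction hypothesis (order $m-1$) yields $\omega_n\to\omega$ a.e., the ``if'' part then gives a.e. convergence of $G_{m,m-1}\omega_n$ on $\Lambda^m$, and subtracting recovers the a.e. convergence of $u_n$. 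The base case $m=1$ requires its own argument: there the cross term is the scalar sequence $\sum_{j\geq 2}u_n(\tilde x_j)$, whose convergence is extracted by evaluating the hypothesis at points of $E\cap E_{\tilde x_2,\dots,\tilde x_N}^N$. Your proposal contains no analogue of either device, so the hard direction is not established.
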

\begin{rem}
\rm{The statement is still true with $U$, $u$ and $u_n$ are a.e. finite.  However, we chose them to be everywhere finite to avoid cluttering the proof with non-essential details.}
\end{rem}
\begin{proof}[Proof of Theorem \ref{Theor1}]
Since there is nothing to prove when $m=N$, we will assume that $1\leq m<N$.  
Suppose first that there exists a finite $u$ such that $u_n\rightarrow u$ on some co-null set 
$E_m\subset \Lambda^m$.  Then the set 
$E_N=\hat{B}_{N,m}E_m\subset \Lambda^N$ is co-null by Lemma \ref{Lem1}.  Moreover, 
$G_{N,m} u_n\rightarrow G_{N,m} u$ on $E_N$. 

Conversely, suppose that there exists a finite $U$ such that $G_{N,m} u_n\rightarrow U$ on some co-null set 
$E\subset \Lambda^N$.   

\textit{Case 1: $m=1$}.  Let us fix 
$(\tilde{x}_2,...,\tilde{x}_N)\in \Lambda^{N-1}$ such that 
$\left | \Lambda\setminus E_{\tilde{x}_2,...,\tilde{x}_N} \right | =0$.  
(This is possible because the a.e. section of a co-null 
set is co-null by the Fubini-Tonelli theorem \cite[Theorem 2.39]{Folland99}.)  By the definition of the set $E$, for every 
$x\in E_{\tilde{x}_2,...,\tilde{x}_N}$:
\begin{equation}
\label{a.e.conv_1}
u_n(x)+\sum_{j=2}^{N} u_n(\tilde{x}_j)\rightarrow N 
U(x, \tilde{x}_2,...,\tilde{x}_N).
\end{equation}
Thus, for any $(y_1,...,y_N)\in 
E_{\tilde{x}_2,...,\tilde{x}_N}^N $ :
\begin{equation}
\label{a.e.conv_2}
\sum_{i=1}^{N} u_n(y_i) + N\sum_{j=2}^{N} u_n(\tilde{x}_j)\rightarrow N \sum_{i=1}^{N}U (y_i,\tilde{x}_2,...,\tilde{x}_N),
\end{equation}
where we have summed (\ref{a.e.conv_1}) over $i$ after 
replacing $x$ with $y_i$.  Let us fix $(y_1,...,y_N)\in 
E_{\tilde{x}_2,...,\tilde{x}_N}^N\cap E $.  Then, 
(\ref{a.e.conv_2}) holds together with 
$\sum_{i=1}^{N} u_n(y_i) \rightarrow N U(y_1,...,y_N)$.  Thus, 
\begin{equation}
\label{a.e.conv_3}
\sum_{i=2}^{N} u_n(\tilde{x_i}) \rightarrow 
\sum_{i=1}^{N} U(y_i,\tilde{x}_2,...,\tilde{x}_N)-
U(y_1,...,y_N)=: C.
\end{equation}
Using this result in (\ref{a.e.conv_1}), we finally obtain:
\begin{equation}
u_n(x)\rightarrow N U(x, \tilde{x}_2,...,\tilde{x}_N)-C\quad 
\forall x\in E_{\tilde{x}_2,...,\tilde{x}_N}.
\end{equation}

\textit{Case 2: $2\leq m<N$}.  The proof for this case 
will proceed by induction on $m$.  Let us define 
$M:=\min(m,N-m)$.  Suppose that 
the "only if" 
statement of the theorem is true for $m-1$.  By the Fubini-Tonelli theorem, we can fix 
$(\tilde{x}_{m+1},...,\tilde{x}_N)\in \Lambda^{N-m}$ such that 
$\left | \Lambda^m\setminus 
E_{\tilde{x}_{m+1},...,\tilde{x}_N} \right | =0$.  By the definition of the set $E$:  %For every 
%$(x_1,...,x_m)\in E_{\tilde{x}_{m+1},...,\tilde{x}_N}$:
\begin{equation}
%\begin{split}
\label{a.e.convm_1}
\left (G_{N,m} u_n\right )(\cdot,\tilde{x}_{m+1},...,\tilde{x}_N)\rightarrow \\ U(\cdot,\tilde{x}_{m+1},...,\tilde{x}_N)~~\text{
on $E_{\tilde{x}_{m+1},...,\tilde{x}_N}$}.
%\end{split}
\end{equation}
Using the functions $v_n^{(m-k)}:\Lambda^{m-k}\rightarrow \Bbb{R}$, $1\leq k\leq m-1$, and constants $v_n^{(0)}\in\Bbb{R}$ defined as:
\begin{equation}
\begin{split}
&v_n^{(m-k)}:=\\
&\sum_{1\leq j_1<\cdots<j_k\leq N-m} 
u_n(\cdot,\tilde{x}_{m+j_1},...,\tilde{x}_{m+j_k})\quad
\text{if}~~1\leq k\leq m-1, \\
&v_n^{(0)}:=\sum_{1\leq j_1<\cdots<j_m\leq N-m} 
u_n(\tilde{x}_{m+j_1},...,\tilde{x}_{m+j_m}),
\end{split}
\end{equation}
the left hand side of (\ref{a.e.convm_1}) can be rewritten as 
\begin{equation}
\label{a.e.convm_2}
\begin{split}
\binom{N}{m}
&\left (G_{N,m} u_n\right )(\cdot,\tilde{x}_{m+1},...,\tilde{x}_N)
=\\
&\left [ u_n + \sum_{k=1}^{M}\binom{m}{m-k} 
G_{m,m-k} v_n^{(m-k)}\right ].
\end{split}
\end{equation}
By Lemma \ref{Lem2}, $G_{m,m-k}=G_{m,m-1}G_{m-1,m-k}$, $1\leq k\leq m$.  Thus, the right hand side of (\ref{a.e.convm_2}) simplifies to:
%\begin{equation}
%\label{a.e.convm_3}
%\begin{split}
%\binom{N}{m}
%\left (G_{N,m} u_n\right )(\cdot,\tilde{x}_{m+1},...,\tilde{x}_N)
$u_n + \left (G_{m,m-1} \omega_n\right )$,
%\end{equation}
where $\omega_n: \Lambda^{m-1}\rightarrow \Bbb{R}$ is 
\begin{equation}
\label{omega_n}
\omega_n:=\sum_{k=1}^{M}\binom{m}{m-k} 
G_{m-1,m-k} v_n^{(m-k)}.
\end{equation}
Then, in view of (\ref{a.e.convm_1}), we obtain that %for every 
%$(x_1,...,x_m)\in E_{\tilde{x}_{m+1},...,\tilde{x}_N}$:
\begin{equation}
\label{a.e.convm_3}
u_n + G_{m,m-1} \omega_n 
\rightarrow \binom{N}{m}
U(\cdot,\tilde{x}_{m+1},...,\tilde{x}_N)~~ \text{on 
$E_{\tilde{x}_{m+1},...,\tilde{x}_N}$}.
\end{equation}
The set $\Omega= \hat{B}_{N,m} 
E_{\tilde{x}_{m+1},...,\tilde{x}_N}\subset \Lambda^N $ is co-null by Lemma \ref{Lem1}.  Moreover, 
applying the operator $G_{N,m}$ to both sides of 
(\ref{a.e.convm_3}), gives that on $\Omega$: 
\begin{equation}
\begin{split}
&G_{N,m} u_n  + G_{N,m} G_{m,m-1} \omega_n = \\
& G_{N,m} u_n + G_{N,m-1}  \omega_n 
\rightarrow \binom{N}{m}  G_{N,m} 
U(\cdot,\tilde{x}_{m+1},...,\tilde{x}_N).
\end{split}
\label{a.e.convm_4}
\end{equation}
(Lemma \ref{Lem2} was used in the equality.)  
Since (\ref{a.e.convm_4}) and $G_{N,m} u_n \rightarrow 
U$ both hold on $\Omega\cap E$, we further obtain that
\begin{equation}
\label{a.e.convm_5}
G_{N,m-1}  \omega_n 
\rightarrow \binom{N}{m}  G_{N,m}U(\cdot,\tilde{x}_{m+1},...,\tilde{x}_N) - U~~\text{on $\Omega\cap E$}.
\end{equation}
Now, the induction hypothesis implies that there is a finite 
function $\omega:\Lambda^{m-1}\rightarrow \Bbb{R}$ such 
that $\omega_n\rightarrow \omega$ a.e.  Further, by the "if" statement of the theorem there is a co-null set $E_m\subset
\Lambda^m$ and a finite 
function $\phi:\Lambda^{m}\rightarrow \Bbb{R}$ such 
that $G_{m,m-1}\omega_n\rightarrow \phi$ on $E_m$.  Using this 
result in (\ref{a.e.convm_3}), we finally obtain:
\begin{equation}
\label{a.e.convm_6}
u_n\rightarrow \binom{N}{m}
U(\cdot,\tilde{x}_{m+1},...,\tilde{x}_N)-
\phi~~ \text{on 
$E_m\cap E_{\tilde{x}_{m+1},...,\tilde{x}_N}$}.
\end{equation}

\end{proof}

\begin{cor}
\label{cor1}
Let $1\leq m \leq N$ be integers, and $u$ be a finite 
function on $\Lambda^m$.  Then, $G_{N,m}u=0$ a.e. if and only if $u=0$ a.e.  In particular, the linear operator 
$G_{N,m}$ is injective.  
\end{cor}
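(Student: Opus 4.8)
The plan is to derive everything from Theorem \ref{Theor1} by feeding it well-chosen constant or rescaled sequences of kernels, with Lemma \ref{Lem1} handling the trivial direction.

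First I would dispose of the implication $u=0$ a.e. $\Rightarrow G_{N,m}u=0$ a.e. If $u$ vanishes on a co-null set $E_m\subset\Lambda^m$, then $\hat{B}_{N,m}E_m$ is co-null in $\Lambda^N$ by Lemma \ref{Lem1}, and by the definitions (\ref{GenMean_def}) and (\ref{hatB_def}) every term of the sum defining $G_{N,m}u$ vanishes on $\hat{B}_{N,m}E_m$; hence $G_{N,m}u=0$ a.e. (This is also exactly the ``if'' half of Theorem \ref{Theor1} applied to the constant sequence $u_n=u$.)

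Next, the converse $G_{N,m}u=0$ a.e. $\Rightarrow u=0$ a.e. The key point is that the ``only if'' half of Theorem \ref{Theor1} produces \emph{some} a.e. limit of the kernels but does not identify it, so I cannot simply plug in the constant sequence $u_n=u$. Instead I would apply Theorem \ref{Theor1} to the rescaled sequence $u_n:=nu$. Since $G_{N,m}u_n=n\,G_{N,m}u=0$ a.e. for every $n$, we have $G_{N,m}u_n\rightarrow 0$ a.e., so the theorem yields a finite function $u'$ on $\Lambda^m$ with $u_n\rightarrow u'$ on some co-null set $F$. For each $x\in F$ the numerical sequence $nu(x)$ then has the finite limit $u'(x)$, which is possible only if $u(x)=0$; thus $u=0$ a.e. (The alternating sequence $u_{2k}=u$, $u_{2k+1}=0$ serves the same purpose.)

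Finally, injectivity follows formally: $G_{N,m}$ is linear by (\ref{GenMean_def}) and, by the first implication, descends to a well-defined operator on equivalence classes modulo a.e. equality; so $G_{N,m}u_1=G_{N,m}u_2$ a.e. gives $G_{N,m}(u_1-u_2)=0$ a.e. and hence $u_1=u_2$ a.e. I expect no genuine obstacle beyond Theorem \ref{Theor1} itself; the only subtlety is choosing an auxiliary sequence whose kernels can have a finite a.e. limit only when $u=0$ a.e.
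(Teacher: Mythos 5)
Your proof is correct and follows essentially the same route as the paper: the easy direction via Lemma \ref{Lem1}, and the converse by applying the ``only if'' half of Theorem \ref{Theor1} to an auxiliary sequence whose finite a.e.\ limit forces $u=0$ a.e. The paper uses exactly the alternating sequence $u,0,u,0,\dots$ that you mention parenthetically; your rescaled sequence $nu$ is an equally valid minor variant of the same idea.
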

\begin{proof}
It is easy to verify, using Lemma \ref{Lem1}, that $G_{N,m}u=0$ a.e. if $u=0$ a.e.  For the converse, let us 
define a sequence $(v_n)$ by $v_n=u$ if $n$ is odd and $v_n=0$ if 
$n$ is even.  Since $G_{N,m} v_n\rightarrow 0 $ a.e., Theorem \ref{Theor1} implies that there is a finite function 
$v$ on $\Lambda^m$ such that $v_n\rightarrow v $ a.e.  Then, 
$v=u=0$ a.e. by the definition of sequence $(v_n)$.  
\end{proof}

\section{Measurability and essential boundedness\label{sec_measur}}

\begin{theor}
\label{Theor2}
Let $1\leq m\leq N$ be integers, and 
$u:\Lambda^m\rightarrow \Bbb{R}$ be a function.  
Let $U:=G_{N,m}u$.  
Then,
\newline
(i) $U$ is measurable if and only if $u$ is measurable.\newline
(ii) $U\in L^{\infty}(\Lambda^N; d^{N}x)$ if and only if 
$u\in L^{\infty}(\Lambda^m; d^{m}x)$.  Moreover, there is a 
constant $C(N,m)$ such that
$|| U ||_{\infty,d^{N}x}\leq || u ||_{\infty,d^{m}x}\leq 
C(N,m) || U ||_{\infty,d^{N}x}$, with 
$C(N,1)=1$.  In particular, 
$G_{N,m}$ is an isomorphism from $L^{\infty}(\Lambda^m; d^{m}x)$ onto a closed subspace of 
$L^{\infty}(\Lambda^N; d^{N}x)$.
\end{theor}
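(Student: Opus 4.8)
The plan is to establish the ``if'' directions by direct computation, prove the ``only if'' part of (i) by induction on $m$, and then deduce (ii) from (i) together with a norm estimate; throughout one leans on the exact, everywhere-valid identities that already occur (stripped of the convergence) in the proof of Theorem \ref{Theor1}. For the ``if'' directions: if $u$ is measurable, each summand $(x_1,\dots,x_N)\mapsto u(x_{i_1},\dots,x_{i_m})$ is the composition of $u$ with a coordinate projection $\pi\colon\Lambda^N\to\Lambda^m$, and $\pi$ pulls product-measurable sets back to product-measurable sets and $d^m x$-null sets back to $d^N x$-null sets (here $\sigma$-finiteness of $dx$ is used, so that the convention $0\cdot\infty=0$ is harmless); hence $u\circ\pi$, and therefore the finite average $U$, is $d^N x$-measurable. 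If in addition $|u|\le\|u\|_{\infty,d^m x}$ on a co-null $E_m\subset\Lambda^m$, then $|U|\le\|u\|_{\infty,d^m x}$ on $\hat{B}_{N,m}E_m$, which is co-null by Lemma \ref{Lem1}, so $\|U\|_{\infty,d^N x}\le\|u\|_{\infty,d^m x}$; the same application of Lemma \ref{Lem1} (to $\{u=u'\}$) shows $G_{N,m}$ is well defined on $L^{\infty}$-classes.

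For the ``only if'' part of (i) I would induct on $m$, with $N\ge m$ arbitrary; the case $m=N$ is trivial since $G_{N,N}=\func{id}$. For $m=1$ and $N>1$, the definition of $U=G_{N,1}u$ gives, for every fixed $(\tilde x_2,\dots,\tilde x_N)$, the pointwise identity
\[
u(x)=N\,U(x,\tilde x_2,\dots,\tilde x_N)-\sum_{j=2}^{N}u(\tilde x_j),
\]
so choosing $(\tilde x_2,\dots,\tilde x_N)$ with the section $U(\cdot,\tilde x_2,\dots,\tilde x_N)$ measurable (possible by Fubini--Tonelli) exhibits $u$ as a measurable function plus a constant. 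For $m\ge 2$ and $N>m$, fix $(\tilde x_{m+1},\dots,\tilde x_N)$ so that $\psi:=U(\cdot,\tilde x_{m+1},\dots,\tilde x_N)$ is measurable on $\Lambda^m$; the algebraic part of the proof of Theorem \ref{Theor1} then yields the exact identity $\binom{N}{m}\psi=u+G_{m,m-1}\omega$, with $\omega\colon\Lambda^{m-1}\to\Bbb{R}$ built from $u$ and $\tilde x_{m+1},\dots,\tilde x_N$ exactly as $\omega_n$ is built in (\ref{omega_n}). Applying $G_{N,m}$ and using Lemma \ref{Lem2} in the form $G_{N,m}G_{m,m-1}=G_{N,m-1}$ gives $G_{N,m-1}\omega=\binom{N}{m}G_{N,m}\psi-U$, whose right-hand side is measurable by the ``if'' part applied to $\psi$; hence $G_{N,m-1}\omega$ is measurable, so $\omega$ is measurable by the induction hypothesis for the pair $(m-1,N)$, so $G_{m,m-1}\omega$ is measurable by the ``if'' part, and therefore $u=\binom{N}{m}\psi-G_{m,m-1}\omega$ is measurable.

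For (ii), given $U\in L^{\infty}$ part (i) makes $u$ measurable, so only the bound on $\|u\|_{\infty,d^m x}$ remains. If $m=1$ and $\|u\|_{\infty,d^1 x}>\|U\|_{\infty,d^N x}=:B$, then for some $\varepsilon>0$ one of the measurable sets $\{u>B+\varepsilon\}$, $\{u<-B-\varepsilon\}$ has positive measure, and its $N$-fold Cartesian power is a positive-measure subset of $\Lambda^N$ on which $|U|=\big|\tfrac1N\sum_i u(x_i)\big|>B$, contradicting $\|U\|_{\infty,d^N x}=B$; hence $\|u\|_\infty\le\|U\|_\infty$, which together with the ``if'' bound yields $C(N,1)=1$ (and also $u\in L^\infty$). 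For $m\ge 2$ I would induct again: additionally choose $(\tilde x_{m+1},\dots,\tilde x_N)$ so that $\|\psi\|_{\infty,d^m x}\le\|U\|_{\infty,d^N x}$ (valid for a.e.\ section), use the ``if'' bound to get $\|G_{N,m-1}\omega\|_{\infty,d^N x}\le\binom{N}{m}\|\psi\|_{\infty,d^m x}+\|U\|_{\infty,d^N x}\le(\binom{N}{m}+1)\|U\|_{\infty,d^N x}$, apply the induction hypothesis to obtain $\|\omega\|_{\infty,d^{m-1}x}\le C(N,m-1)(\binom{N}{m}+1)\|U\|_{\infty,d^N x}$, and conclude from $u=\binom{N}{m}\psi-G_{m,m-1}\omega$ that $\|u\|_{\infty,d^m x}\le\big(\binom{N}{m}+C(N,m-1)(\binom{N}{m}+1)\big)\|U\|_{\infty,d^N x}$; so $C(N,m):=\binom{N}{m}+C(N,m-1)(\binom{N}{m}+1)$ works. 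The resulting two-sided estimate $\|U\|_\infty\le\|u\|_\infty\le C(N,m)\|U\|_\infty$ says $G_{N,m}$ is bounded and bounded below on $L^{\infty}(\Lambda^m;d^m x)$, hence an isomorphism onto a (necessarily closed) subspace of $L^{\infty}(\Lambda^N;d^N x)$; injectivity also follows from Corollary \ref{cor1}.

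The genuine difficulty, exactly as in Theorem \ref{Theor1}, is the ``only if'' direction for $m\ge 2$: one cannot recover $u$ by restricting $U$ to a diagonal, since that is a null set. The escape is the bootstrap above --- the identity $\binom{N}{m}\psi=u+G_{m,m-1}\omega$ combined with $G_{N,m}G_{m,m-1}=G_{N,m-1}$ reduces order $m$ to order $m-1$ after a harmless a.e.\ choice of the frozen variables. What remains is bookkeeping: the completeness/$\sigma$-finiteness argument for pulling back null sets, the fact that the finitely many ``good-section'' requirements on $(\tilde x_{m+1},\dots,\tilde x_N)$ can be met simultaneously, and the separate elementary argument that pins down the sharp constant $C(N,1)=1$.
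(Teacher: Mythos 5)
Your proposal is correct and follows essentially the same route as the paper: the ``if'' direction via Lemma \ref{Lem1}, the $m=1$ base case by freezing a good section and comparing essential bounds on an $N$-fold power of a positive-measure set, and the inductive step via the identity $\binom{N}{m}\psi=u+G_{m,m-1}\omega$ together with $G_{N,m}G_{m,m-1}=G_{N,m-1}$, yielding the same recursion for $C(N,m)$. The only differences are cosmetic (your one-sided sets $\{u>B+\varepsilon\}$, $\{u<-B-\varepsilon\}$ versus the paper's separate ess sup/ess inf comparison, and slightly more detail on measurability of $u\circ\pi$), so no further comparison is needed.
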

\begin{proof}
"In particular" part is the direct consequence of Theorem \ref{Theor1} and Corollary \ref{cor1}.
The conclusion holds trivially for $m=N$, so let us assume that $1\leq m<N$.  Since the proofs of (i) and (ii) are very 
similar, we will only show (ii).  However, it will be clear that (i) 
is established by a shorter version of the same argument.    
%Let $U:=G_{N,m}u$.  
 
Suppose first that $u\in L^{\infty}(\Lambda^m; d^{m}x)$.    Then, $U$ is a finite sum of measurable functions.  
Namely, $U=\sum_{1\leq i_1<\cdots<i_m\leq N} 
U_{i_1,...,i_m}$, where $U_{i_1,...,i_m}(x_1,...,x_N):=
u(x_{i_1},...,x_{i_m})$.  Moreover, the set 
$E_m\subset\Lambda^m$ on which $| u | \leq || u ||_{\infty,d^{m}x}$ is co-null.  It follows that $\left | U \right | \leq || u ||_{\infty,d^{m}x} $
on the co-null set $\hat{B}_{N,m} E_m\subset \Lambda^N$, and so  $\left |\left | U \right |\right |_{\infty,d^{N}x} \leq 
|| u ||_{\infty,d^{m}x} $.

Conversely, suppose that $U\in L^{\infty}(\Lambda^N; d^{N}x)$.  
\newline

\textit{Case 1}:  $m=1$.  The Fubini-Tonelli 
theorem implies that there is $(\tilde{x}_2,...,\tilde{x}_N)\in \Lambda^{N-1}$ 
such that $U(\cdot,\tilde{x}_2,...,\tilde{x}_N)=u(\cdot) + 
\sum_{i=2}^{N} u(\tilde{x}_i)\in L^{\infty}(\Lambda; dx)$), and so $u\in L^{\infty}(\Lambda; dx)$. 

%Let $s:=$ ess sup $u $, 
%and $S:=$ ess sup $U $.  If $s=\infty$, then for every 
%$a>0$, the measure of the set $A_a:=\{x\in\Lambda : u(x)>a\}$ is 
%strictly positive.  Since $U>a$ on $A_a^N$, and $\left | A_a^N\right |= \left | A_a\right |^N>0$, it follows that 
%$S=\infty$, a contradiction. Thus, $s<\infty$.  Similarly, 
%ess inf $u>-\infty$, and so $u\in L^{\infty}(\Lambda; dx)$.

Next, we will show that $|| u ||_{\infty,dx}=\left | \left | U 
\right |\right |_{\infty,d^{N}x} $.  In view of the "if" part of the theorem, it suffices to prove that $\left | \left | U \right |\right |_{\infty,d^{N}x} \geq || u ||_{\infty,dx}$.  

Let $s:=$ ess sup $u $, and $S:=$ ess sup $U $.
For every 
$\epsilon>0$, the measure of the set $A_{\epsilon}:=\{x\in\Lambda : u(x)>s-\epsilon\}$ is strictly positive.  Since the 
set $E:=\{(x_1,...,x_N)\in\Lambda^N: U(x_1,...,x_N)\leq S\}
\subset \Lambda^N$ is co-null, it follows that $\left | A_{\epsilon}^N\cap E\right |>0$.  Moreover, for every 
$(x_1,...,x_N)\in A_{\epsilon}^N\cap E$, 
$S\geq U(x_1,...,x_N) > s-\epsilon$.  Thus, $S\geq s$.  Similarly, 
ess inf $U\leq$ ess inf $u$, and so $\left | \left | U \right |\right |_{\infty,d^{N}x} \geq || u ||_{\infty,dx}$.
 
\textit{Case 2}: $2\leq m<N$.  The proof will proceed by  
induction on $m$.  Suppose that the "only if" statement of the theorem holds for $m-1$.  
The Fubini-Tonelli theorem implies that there is $(\tilde{x}_{m+1},...,\tilde{x}_{N})\in\Lambda^{N-m}$ such that 
$U(\cdot,\tilde{x}_{m+1},...,\tilde{x}_{N})\in 
L^{\infty}(\Lambda^m; d^{m}x)$, with 
\begin{equation}
\label{ineq_1}
|| U(\cdot,\tilde{x}_{m+1},...,\tilde{x}_{N}) ||_{\infty,d^{m}x}\leq || U ||_{\infty,d^{N}x}. 
\end{equation} 
By the proof of Theorem \ref{Theor1} (see (\ref{a.e.convm_1} - \ref{a.e.convm_3}) ), there is $\omega:\Lambda^{m-1}\rightarrow \Bbb{R}$ such that 
\begin{equation}
\label{equality_1}
\binom{N}{m} U(\cdot,\tilde{x}_{m+1},...,\tilde{x}_{N})=
u + G_{m,m-1} \omega .  
\end{equation}
Applying $G_{N,m}$ to both sides of (\ref{equality_1}), and 
using Lemma \ref{Lem2} yield:
\begin{equation}
%\label{equality_1}
G_{N,m-1}\omega = \binom{N}{m} 
G_{N,m} U(\cdot,\tilde{x}_{m+1},...,\tilde{x}_{N})-U.
\end{equation}
Therefore, by the "if" statement of the theorem and 
(\ref{ineq_1}), $G_{N,m-1}\omega\in 
L^{\infty}(\Lambda^N; d^{N}x)$, with 
\begin{equation}
\label{ineq_2}
\begin{split}
&\left | \left |G_{N,m-1}\omega\right | \right |_{\infty,d^{N}x}
\leq \binom{N}{m} || U(\cdot,\tilde{x}_{m+1},...,\tilde{x}_{N}) 
||_{\infty,d^{m}x} + || U ||_{\infty,d^{N}x} \\
&\leq 
\left ( \binom{N}{m} +1 \right ) || U ||_{\infty,d^{N}x}.
\end{split}
\end{equation}
Then, the induction hypothesis implies that 
$\omega\in 
L^{\infty}(\Lambda^{m-1}; d^{m-1}x)$, and  
\begin{equation}
\label{ineq_3}
||\omega ||_{\infty,d^{m-1}x}\leq C(N,m-1)
\left ( \binom{N}{m} +1 \right ) || U ||_{\infty,d^{N}x}
\end{equation}
for some constant $C(N,m-1)$.  Next, using the "if" part of 
the theorem again, we infer that $G_{m,m-1}\omega\in
L^{\infty}(\Lambda^m;d^{m}x)$, and $\left | \left | 
G_{m,m-1}\omega \right | \right |_{\infty,d^{m}x}\leq 
||\omega ||_{\infty,d^{m-1}x}$.  This result, together 
with (\ref{equality_1}), (\ref{ineq_1}), and (\ref{ineq_3}),  
yield that $u\in L^{\infty}(\Lambda^m;d^{m}x)$, and
\begin{equation}
\begin{split}
&|| u ||_{\infty,d^{m}x}\leq \binom{N}{m} 
\left | \left | U(\cdot,\tilde{x}_{m+1},...,\tilde{x}_{N}) 
\right |\right |_{\infty,d^{m}x} + 
||\omega ||_{\infty,d^{m-1}x}\\
& \leq \left [ \binom{N}{m} \left ( 1 + C(N,m-1)\right ) + 
C(N,m-1) \right ] 
\left | \left | U 
\right |\right |_{\infty,d^{N}x}.
\end{split}
\end{equation}
\end{proof}

\section{Integrability\label{sec_integ}}

Let $N\geq 2$ and $P$ be a symmetric probability density 
on $\Lambda^N$.  That is, $P$ is a nonnegative, symmetric function, and $\int_{\Lambda^N} P=1$.  For every 
$1\leq m<N$, the marginal probability density 
on $\Lambda^m$ is defined as 
\begin{equation}
\label{rho^m_def}
\rho^{(m)}:=\int_{\Lambda^{N-m}} P(\cdot,x_{m+1},...,x_N)dx_{m+1}\cdots dx_{N}~~\text{a.e. on $\Lambda^m$}.  
\end{equation}
Note that $\rho^{(m)}$ is symmetric a.e. on $\Lambda^m$.  
In this section we will discuss the relationship between integrability of generalized $N$-means with respect to measure $Pd^{N}x$ and integrability of their kernels with respect to measure $\rho^{(m)}d^{m}x$. 

It is easy to convince oneself that $u\in L^{r}(\Lambda^m;
\rho^{(m)} d^{m} x)$ implies that 
$G_{N,m}u\in L^{r}(\Lambda^N;P d^{N} x)$ for 
$1\leq r<\infty$.  However, the converse 
is not obvious and, in fact, is not true in general.  This situation is illustrated with the following example.  

\begin{exam}
\label{Exam_int}
\rm{
Consider a $\sigma$-finite measure space $(\Lambda; d\mu)$, where $\Lambda=\Bbb{N}$, and $d\mu$ is the counting measure.  Let us define the probability density $P$ on 
$\Lambda^2$ by the formula:
\begin{equation}
\label{ex_P}
P(i,j):=\left \{
\begin{array}{ll}
\frac{1}{(i+j)^2}&\text{if $|i-j|=1$}\\
0&\text{if $|i-j|\neq1$}
\end{array}
\right .
\end{equation}
Then, $P$ is symmetric, and 
\begin{equation}
\int_{\Lambda^2} P d^{2}\mu = 2 \sum_{i=1}^{\infty} 
\frac{1}{(2i+1)^2}<\infty.
\end{equation}
(That $\int_{\Lambda^2} P d^{2}\mu\neq 1$ is immaterial.)  
For every $i\geq 2$, the marginal probability density 
$\rho^{(1)}(i)$ is calculated to be 
\begin{equation}
\label{ex_rho1}
\rho^{(1)}(i)=\sum_{j=1}^{\infty} P(i,j)=
\frac{1}{(2i+1)^2}+\frac{1}{(2i-1)^2}>\frac{1}{(2i+1)^2}.
\end{equation}
Let us define $u:\Lambda\rightarrow \Bbb{R}$ by 
$u(i)=2(-1)^{i}i$.  Then, $|u(i)+u(j)|=2$ whenever 
$|i-j|=1$.  Thus, using (\ref{ex_P}) and (\ref{ex_rho1}), we find that
\begin{equation}
\int_{\Lambda^2} P |G_{2,1} u |d^{2}\mu = 
2 \sum_{i=1}^{\infty} \frac{1}{(2i+1)^2}<\infty, 
\end{equation}
but
\begin{equation}
\int_{\Lambda}\rho^{(1)}|u| d\mu>2\sum_{i=2}^{\infty}
 \frac{i}{(2i+1)^2}=\infty.
\end{equation} 
 }
\end{exam} 
In spite of Example \ref{Exam_int}, an extra condition on $P$ ensures that a generalized $N$-mean of order $m$ is in 
$L^{1}(\Lambda^{N};Pd^{N}x)$ if and only if its kernel is in 
$L^{1}(\Lambda^{m};\rho^{(m)}d^{m}x)$.  The generality of 
this condition is addressed in Theorem \ref{prob_est}.  
We begin with two lemmas that will be used in the proof of 
Theorem \ref{Theor3}, the main result of this section.    

\begin{lem}
\label{Lem_int3}
Let $1\leq m\leq N-1$ be integers, $A\subset \Lambda$ be 
a subset of positive measure, and 
$\gamma: A\rightarrow (0,\infty)$ be a measurable  function.  If, 
\begin{equation}
\label{int_lem3_eq1}
\rho^{(m+1)}(x_1,...,x_{m+1})\geq \gamma(x_{m+1}) 
\rho^{(m)}(x_1,...,x_{m}) 
\end{equation}
for a.e.  
$(x_1,...,x_{m+1})\in \Lambda^m\otimes A$, then 
\begin{equation}
\label{int_lem3_eq2}
\rho^{(m)}(x_1,...,x_{m})\geq \gamma(x_{m}) 
\rho^{(m-1)}(x_1,...,x_{m-1}) 
\end{equation}
for a.e. 
$(x_1,...,x_{m})\in \Lambda^{m-1}\otimes A$.
\end{lem}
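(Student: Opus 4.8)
The plan is to integrate the hypothesis (\ref{int_lem3_eq1}) with respect to $x_1$ over $\Lambda$ and to recognize the two resulting integrals as marginals of one lower order. Since every function in sight is nonnegative ($\rho^{(k)}\ge 0$ a.e.\ and $\gamma>0$), the Fubini--Tonelli theorem applies without any integrability hypotheses, and the fact that the densities $\rho^{(k)}$ are defined only a.e.\ causes no difficulty. (The boundary symbols $\rho^{(N)}$ and $\rho^{(0)}$ occurring when $m=N-1$ or $m=1$ are to be read as $P$ and as $\int_{\Lambda^N}P\,d^Nx$, respectively.)

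First I would record two marginal--consistency identities: $\int_\Lambda \rho^{(m+1)}(x_1,x_2,\dots,x_{m+1})\,dx_1=\rho^{(m)}(x_2,\dots,x_{m+1})$ for a.e.\ $(x_2,\dots,x_{m+1})\in\Lambda^m$, and $\int_\Lambda \rho^{(m)}(x_1,x_2,\dots,x_m)\,dx_1=\rho^{(m-1)}(x_2,\dots,x_m)$ for a.e.\ $(x_2,\dots,x_m)\in\Lambda^{m-1}$. Both follow by writing the higher marginal as a partial integral of $P$ via (\ref{rho^m_def}), using the symmetry of $P$ to move the integration variable $x_1$ into the last slot, and then collapsing the iterated integral by Fubini--Tonelli into $\int_{\Lambda^{N-m}}P(x_2,\dots,x_{m+1},\,\cdot\,)$ (respectively $\int_{\Lambda^{N-m+1}}P(x_2,\dots,x_m,\,\cdot\,)$), which is by definition the required marginal.

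Next, let $Z\subset\Lambda^m\otimes A$ be the $d^{m+1}x$-null set such that (\ref{int_lem3_eq1}) holds on $(\Lambda^m\otimes A)\setminus Z$. By the Fubini--Tonelli theorem, for a.e.\ $(x_2,\dots,x_{m+1})\in\Lambda^{m-1}\otimes A$ the $x_1$-section $\{x_1\in\Lambda:(x_1,\dots,x_{m+1})\in Z\}$ is a null subset of $\Lambda$. Intersecting the co-null set of such tuples with the two co-null sets on which the identities of the previous paragraph hold yields a co-null set $\mathcal{C}\subset\Lambda^{m-1}\otimes A$. For each $(x_2,\dots,x_{m+1})\in\mathcal{C}$, inequality (\ref{int_lem3_eq1}) holds for a.e.\ $x_1\in\Lambda$; integrating it in $x_1$ over $\Lambda$, pulling the $x_1$-independent constant $\gamma(x_{m+1})$ out of the integral, and applying the two identities gives $\rho^{(m)}(x_2,\dots,x_{m+1})\ge\gamma(x_{m+1})\,\rho^{(m-1)}(x_2,\dots,x_m)$. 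Relabelling $x_{j+1}\mapsto x_j$ turns this into (\ref{int_lem3_eq2}) for a.e.\ $(x_1,\dots,x_m)\in\Lambda^{m-1}\otimes A$.

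The one delicate point, which I expect to be the main obstacle, is bookkeeping with the a.e.\ qualifiers: the hypothesis is an a.e.\ statement on the slab $\Lambda^m\otimes A$, and one must be certain that after integrating out $x_1$ the inequality survives for a.e.\ tuple $(x_2,\dots,x_{m+1})$ \emph{with $x_{m+1}$ still confined to $A$}. This is exactly why the null set $Z$ must be sectioned along the \emph{first} coordinate rather than along $x_{m+1}$, and why the symmetry of $P$ is invoked only to relocate the integration variable into the final slot of $\rho^{(m+1)}$ and $\rho^{(m)}$, leaving the distinguished argument $x_{m+1}$ untouched.
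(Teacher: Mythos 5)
Your proof is correct and follows essentially the same route as the paper: integrate the hypothesis over one of the variables not confined to $A$, use the symmetry of $P$ for the marginal-consistency identities, and handle the a.e.\ qualifiers by sectioning the exceptional null set along the integrated variable via Fubini--Tonelli. The only differences are cosmetic --- you integrate out $x_1$ while the paper integrates out $x_m$, and you invoke Tonelli (nonnegativity) where the paper notes a.e.\ integrability of the sections --- neither of which changes the argument.
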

\begin{proof}
Let $E_{m+1}\subset\Lambda^{m+1}$ be a co-null set 
such that (\ref{int_lem3_eq1}) holds on 
$E_{m+1} \cap \left (\Lambda^m\otimes A\right )$.  By the 
Fubini-Tonelli theorem, there is a co-null set $E_m\subset 
\Lambda^m$ such that for every 
$(x_1,...,x_{m-1},x_{m+1})\in E_m$ both sides of 
(\ref{int_lem3_eq1}) are integrable functions of $x_m$, and 
the section $(E_{m+1})_{x_1,...,x_{m-1},x_{m+1}}\subset 
\Lambda$ is co-null.  If $(x_1,...,x_{m-1},x_{m+1})\in E_m\cap \left (
\Lambda^{m-1}\otimes A\right )$, and  $x_m\in 
(E_{m+1})_{x_1,...,x_{m-1},x_{m+1}}$, then
$(x_1,...,x_{m+1})\in E_{m+1}\cap \left (\Lambda^m\otimes A\right )$, and so inequality (\ref{int_lem3_eq1}) holds.  
Thus, integrating both sides of (\ref{int_lem3_eq1}) with respect to $x_m$, and  
subsequently renaming $m+1$ with $m$ yield:
\begin{equation}
%\label{int_lem3_eq2}
\rho^{(m)}(x_1,...,x_{m})\geq \gamma(x_{m}) 
\rho^{(m-1)}(x_1,...,x_{m-1}) 
\end{equation}
for every 
$(x_1,...,x_{m})\in E_m\cap \left (\Lambda^{m-1}\otimes A\right )$.
\end{proof}

\begin{lem}
\label{Lem_int4}
Let $1\leq m\leq N-1$ be integers, $A\subset \Lambda$ be 
a subset of positive measure, and 
$\gamma: A\rightarrow (0,\infty)$ be a measurable  
function.  Suppose that 
\begin{equation}
\label{int_lem4_eq1}
P(x_1,...,x_{N})\geq \gamma(x_{N}) 
\rho^{(N-1)}(x_1,...,x_{N-1})
\end{equation}
for a.e.  
$(x_1,...,x_{N})\in \Lambda^{N-1}\otimes A$.  Then, 
\begin{equation}
\label{int_lem4_eq2}
P(x_1,...,x_{N})\geq \gamma(x_{N})\cdots\gamma(x_{m+1}) 
\rho^{(m)}(x_1,...,x_{m}) 
\end{equation}
for a.e. 
$(x_1,...,x_{N})\in \Lambda^{m}\otimes A^{N-m}$.  
\end{lem}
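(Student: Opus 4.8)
The plan is to strip off one factor $\gamma(x_j)$ at a time by iterating Lemma \ref{Lem_int3}, and then to multiply the resulting one-step inequalities together. It is convenient to set $\rho^{(N)}:=P$; by the symmetry of $P$ and of its marginals this is consistent with the definition of the marginal densities. If $m=N-1$ there is nothing to do, since then the conclusion (\ref{int_lem4_eq2}) is literally the hypothesis (\ref{int_lem4_eq1}); so from now on assume $m\le N-2$.

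First I would establish, by downward induction on $k$, that for every integer $k$ with $m+1\le k\le N$ there is a co-null set $G_k\subset\Lambda^k$ such that
\begin{equation*}
\rho^{(k)}(x_1,\dots,x_k)\ \ge\ \gamma(x_k)\,\rho^{(k-1)}(x_1,\dots,x_{k-1})\qquad\text{on }G_k\cap(\Lambda^{k-1}\otimes A).
\end{equation*}
For $k=N$ this is precisely the hypothesis (\ref{int_lem4_eq1}). For the inductive step, given the estimate at level $k$ for some $k$ with $m+2\le k\le N$, one applies Lemma \ref{Lem_int3} with the role of its ``$m$'' played by $k-1$ --- which is legitimate because then $1\le k-1\le N-1$ --- and obtains the estimate at level $k-1$. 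After $N-1-m$ such steps one has reached level $m+1$.

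To finish, put $\widetilde G:=\bigcap_{k=m+1}^{N}\bigl(G_k\otimes\Lambda^{N-k}\bigr)$, which is co-null in $\Lambda^N$ as a finite intersection of co-null sets. Fix any $(x_1,\dots,x_N)$ in $\widetilde G\cap(\Lambda^m\otimes A^{N-m})$. For each $k$ with $m+1\le k\le N$ one then has $(x_1,\dots,x_k)\in G_k$ and $x_k\in A$, so the level-$k$ estimate holds at that point; since each $\gamma(x_j)$ is strictly positive, multiplying the level-$(k-1)$ inequality by $\gamma(x_N)\cdots\gamma(x_k)$ and descending from $k=N$ down to $k=m+1$ yields
\begin{equation*}
P(x_1,\dots,x_N)\ \ge\ \gamma(x_N)\cdots\gamma(x_{m+1})\,\rho^{(m)}(x_1,\dots,x_m),
\end{equation*}
which is (\ref{int_lem4_eq2}) on the co-null set $\widetilde G$.

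The only delicate point is the bookkeeping concealed in the phrase ``a.e.\ on a product set'': one must verify that $\widetilde G$ really is co-null and that the chained point lies in every $G_k\otimes\Lambda^{N-k}$ while having all of $x_{m+1},\dots,x_N$ in $A$. This is the same Fubini--Tonelli manipulation already carried out in the proof of Lemma \ref{Lem_int3}, so I do not expect a genuine obstacle here; the mathematical substance is simply the iteration of that lemma.
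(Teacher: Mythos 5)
Your proposal is correct and follows essentially the same route as the paper: iterate Lemma \ref{Lem_int3} to obtain the one-step inequalities $\rho^{(k)}\geq\gamma(x_k)\rho^{(k-1)}$ at each level and then chain them, which is exactly what the paper does via its downward induction on $m$ (your explicit intersection of the co-null sets $G_k\otimes\Lambda^{N-k}$ just makes the a.e.\ bookkeeping the paper leaves implicit more visible). No gap to report.
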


\begin{proof}
Inequality (\ref{int_lem4_eq2}) clearly holds when $m=N-1$.  
Suppose that it is satisfied for some $2\leq m \leq N-1$.  We 
will show that it then holds for $m-1$, and so the lemma will 
follow by induction.  

Using Lemma \ref{Lem_int3}, we infer from (\ref{int_lem4_eq1}) by induction that 
\begin{equation}
\label{int_lem4_eq3}
\rho^{(m)}(x_1,...,x_{m}) \geq \gamma(x_m) 
\rho^{(m-1)}(x_1,...,x_{m-1})
\end{equation}
for a.e. $(x_1,...,x_{m})\in \Lambda^{m-1}\otimes A$.  
Therefore, in view of (\ref{int_lem4_eq2}), 
\begin{equation}
\label{int_lem4_eq4}
P(x_1,...,x_{N})\geq \gamma(x_{N})\cdots\gamma(x_{m}) 
\rho^{(m-1)}(x_1,...,x_{m-1}) 
\end{equation}
for a.e. 
$(x_1,...,x_{N})\in \Lambda^{m-1}\otimes A^{N-m+1}$. 
\end{proof}

\begin{theor}
\label{Theor3}
Suppose $N\geq 2$ is an integer, and that for every $x_N$ in some subset $A\subset \Lambda$ of positive measure, 
there is a constant $\gamma(x_N)>0$ such that 
\begin{equation}
\label{int_cond}
P(\cdot,x_N)\geq \gamma(x_N)\rho^{(N-1)}~~\text{ a.e. on $\Lambda^{N-1}$}.  
\end{equation}
Let $1\leq m\leq N$ be integers, $1\leq r<\infty$, and 
$u:\Lambda^m\rightarrow \Bbb{R}$ be a function.  
Define $U:= G_{N,m} u$.   
Then, \newline
$U\in L^{r}(\Lambda^N; Pd^{N}x)$ if and only if 
$u\in L^{r}(\Lambda^m; \rho^{(m)}d^{m}x)$.  
Moreover, there is a 
constant $C:=C(N,m,r,P)$ such that\newline
$|| U||_{r, P d^{N} x}\leq || u ||_{r,\rho^{(m)} d^{m}x}\leq 
C || U ||_{r, P d^{N} x}$.  In particular, 
$G_{N,m}$ is an isomorphism from $L^{r}(\Lambda^m; d^{m}x)$ onto a closed subspace of 
$L^{r}(\Lambda^N; d^{N}x)$.
\end{theor}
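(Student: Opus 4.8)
The plan is to prove the two inequalities $\|U\|_{r,Pd^{N}x}\le\|u\|_{r,\rho^{(m)}d^{m}x}$ and $\|u\|_{r,\rho^{(m)}d^{m}x}\le C\|U\|_{r,Pd^{N}x}$ with $C=C(N,m,r,P)$; the equivalence and the ``in particular'' clause then follow at once, since a bounded, bounded-below linear map between Banach spaces is an isomorphism onto a closed subspace. The case $m=N$ is trivial ($G_{N,N}=\mathrm{id}$, with $\rho^{(N)}$ read as $P$), so assume $1\le m<N$. For the first inequality I would record the more general statement that if $Q$ is an a.e.-symmetric probability density on $\Lambda^{k}$ with $(j)$-marginal $Q^{(j)}$, $1\le j<k$, and $w\in L^{r}(\Lambda^{j};Q^{(j)}d^{j}x)$, then $G_{k,j}w\in L^{r}(\Lambda^{k};Qd^{k}x)$ with $\|G_{k,j}w\|_{r,Qd^{k}x}\le\|w\|_{r,Q^{(j)}d^{j}x}$: this is Jensen's inequality applied to the average defining $G_{k,j}w$, followed by use of the a.e.-symmetry of $Q$ to replace each summand $|w(x_{i_1},\dots,x_{i_j})|^{r}$ by $|w(x_1,\dots,x_j)|^{r}$ and integration of the last $k-j$ variables by Tonelli. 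The choice $(Q,k,j)=(P,N,m)$ gives the first inequality, and $(Q,k,j)=(\rho^{(m)},m,m-1)$ will be reused below once one checks, by Tonelli, that $\rho^{(m)}$ is an a.e.-symmetric probability density on $\Lambda^{m}$ whose $(m-1)$-marginal is $\rho^{(m-1)}$.

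For the converse I first reduce the hypothesis: shrinking $A$ preserves (\ref{int_cond}), so since $A=\bigcup_{k}\{x\in A:\gamma(x)\ge1/k\}$ and $dx$ is $\sigma$-finite, we may assume $A$ measurable with $0<|A|<\infty$ and $\gamma\ge\delta$ on $A$ for some $\delta>0$. Integrating (\ref{int_cond}) over $\Lambda^{N-1}$ gives $\rho^{(1)}\ge\gamma\ge\delta$ a.e.\ on $A$, and Lemma \ref{Lem_int4} then yields $P(x_1,\dots,x_N)\ge\delta^{N-m}\rho^{(m)}(x_1,\dots,x_m)$ for a.e.\ $(x_1,\dots,x_N)\in\Lambda^{m}\otimes A^{N-m}$, hence also $P\ge\delta^{N}$ a.e.\ on $A^{N}$. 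I then prove the converse inequality by induction on $m$, following the pattern of Theorems \ref{Theor1} and \ref{Theor2}.

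In the base case $m=1$ one has $U(x_1,\dots,x_N)=\frac1N\sum_i u(x_i)$ identically. From $\rho^{(1)}(x_1)\le\delta^{-(N-1)}P$ a.e.\ on $\Lambda\otimes A^{N-1}$ and $U\in L^{r}(Pd^{N}x)\subset L^{1}(Pd^{N}x)$ one gets $\int_{\Lambda\otimes A^{N-1}}\rho^{(1)}(x_1)|U|^{r}\,d^{N}x<\infty$; fixing a point $(\tilde x_2,\dots,\tilde x_N)\in A^{N-1}$ with $\int_{\Lambda}\rho^{(1)}(x_1)|U(x_1,\tilde x_2,\dots,\tilde x_N)|^{r}\,dx_1<\infty$ and using the identity $u(x_1)=NU(x_1,\tilde x_2,\dots,\tilde x_N)-\sum_{j\ge2}u(\tilde x_j)$ (a finite constant) gives $u\in L^{r}(\Lambda;\rho^{(1)}dx)$, and then $\int_A|u|\,dx<\infty$ because $\rho^{(1)}\ge\delta$ on $A$. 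For the norm bound I average the pointwise identity $u(x_1)=NU(x_1,\dots,x_N)-\sum_{j\ge2}u(x_j)$ over $(x_2,\dots,x_N)\in A^{N-1}$ against the normalized restriction $d\nu^{N-1}$ of $d^{N-1}x$ to $A^{N-1}$ (legitimate since $\int_A|u|\,dx<\infty$ and $\int_{A^{N-1}}|U(x_1,\cdot)|\,d\nu^{N-1}<\infty$ for $\rho^{(1)}$-a.e.\ $x_1$), obtaining $u(x_1)=N\int_{A^{N-1}}U(x_1,\cdot)\,d\nu^{N-1}-(N-1)\bar u$ with $\bar u=|A|^{-1}\int_A u\,dx$; averaging this once more over $x_1\in A$ collapses to $\bar u=\int_{A^{N}}U\,d\nu^{N}$, and $|\bar u|\le\delta^{-N}|A|^{-N}\|U\|_{1,Pd^{N}x}\le\delta^{-N}|A|^{-N}\|U\|_{r,Pd^{N}x}$ because $P\ge\delta^{N}$ on $A^{N}$. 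Minkowski's integral inequality together with Jensen's inequality and $\rho^{(1)}\le\delta^{-(N-1)}P$ bound the remaining term, giving $\|u\|_{r,\rho^{(1)}dx}\le C(N,1,r,P)\|U\|_{r,Pd^{N}x}$.

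For the inductive step ($2\le m<N$, theorem assumed for $m-1$): using Tonelli and a Chebyshev estimate on $\int_{\Lambda^{m}\otimes A^{N-m}}\rho^{(m)}|U|^{r}\,d^{N}x\le\delta^{-(N-m)}\|U\|_{r,Pd^{N}x}^{r}$, I fix $(\tilde x_{m+1},\dots,\tilde x_N)\in A^{N-m}$ with $U(\cdot,\tilde x_{m+1},\dots,\tilde x_N)\in L^{r}(\Lambda^{m};\rho^{(m)}d^{m}x)$ of norm $\le(2\delta^{-(N-m)}|A|^{-(N-m)})^{1/r}\|U\|_{r,Pd^{N}x}$. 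The algebraic identity from the proof of Theorem \ref{Theor1} ((\ref{a.e.convm_1})--(\ref{a.e.convm_3})) provides a finite $\omega$ on $\Lambda^{m-1}$ with $\binom{N}{m}U(\cdot,\tilde x_{m+1},\dots,\tilde x_N)=u+G_{m,m-1}\omega$; applying $G_{N,m}$ and Lemma \ref{Lem2} gives $G_{N,m-1}\omega=\binom{N}{m}G_{N,m}U(\cdot,\tilde x_{m+1},\dots,\tilde x_N)-U$, which lies in $L^{r}(\Lambda^{N};Pd^{N}x)$ with controlled norm by the first inequality; the induction hypothesis then places $\omega$ in $L^{r}(\Lambda^{m-1};\rho^{(m-1)}d^{m-1}x)$, the first inequality applied to $G_{m,m-1}$ places $G_{m,m-1}\omega$ in $L^{r}(\Lambda^{m};\rho^{(m)}d^{m}x)$, and $u=\binom{N}{m}U(\cdot,\tilde x_{m+1},\dots,\tilde x_N)-G_{m,m-1}\omega$ closes the induction with an explicit $C(N,m,r,P)$. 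I expect the one genuinely delicate point to be the norm bound in the base case $m=1$: a naive ``fix one point $(\tilde x_2,\dots,\tilde x_N)$'' argument recovers $u$ only up to the additive constant $\sum_{j\ge2}u(\tilde x_j)$, which it has no way to control by $\|U\|$, and the averaging identity $\bar u=\int_{A^{N}}U\,d\nu^{N}$, made usable by the lower bound $P\ge\delta^{N}$ on $A^{N}$ from Lemma \ref{Lem_int4}, is precisely what removes this obstruction. Everything else is routine, or a transcription of the arguments already used in Theorems \ref{Theor1} and \ref{Theor2}.
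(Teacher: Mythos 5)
Your proposal is correct in substance and, for most of its length, follows the same route as the paper: the ``if'' inequality via Jensen plus symmetry of the marginals, the reduction to a set where $\gamma\geq\delta$, Lemma \ref{Lem_int4} to get $P\geq\delta^{N-m}\rho^{(m)}$ on $\Lambda^m\otimes A^{N-m}$, a Chebyshev-type selection of a good section point (this is exactly the content of the paper's Lemma \ref{Lem_int5}, which the paper phrases as a proof by contradiction), and the induction on $m$ through the decomposition $\binom{N}{m}U(\cdot,\tilde x_{m+1},\dots,\tilde x_N)=u+G_{m,m-1}\omega$ with $G_{N,m-1}\omega=\binom{N}{m}G_{N,m}U(\cdot,\tilde x_{m+1},\dots,\tilde x_N)-U$. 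The one genuinely different ingredient is your base case $m=1$: you remove the additive constant by averaging the identity $u(x_1)=NU-\sum_{j\geq2}u(x_j)$ over $A^{N-1}$ and then over $A^{N}$, using $P\geq\delta^{N}$ a.e.\ on $A^{N}$ to bound $\bar u=\int_{A^N}U\,d\nu^N$, then Minkowski's integral inequality for the remaining term. This works, but the ``obstruction'' you describe is not actually one: the paper controls the constant directly from the fixed point, observing that $\tilde c=N\,G_{N,1}U(\cdot,\tilde x_2,\dots,\tilde x_N)-U$ as functions on $\Lambda^N$, so that $|\tilde c|=\|\tilde c\|_{r,Pd^Nx}\leq N\|U(\cdot,\tilde x_2,\dots,\tilde x_N)\|_{r,\rho^{(1)}dx}+\|U\|_{r,Pd^Nx}$ because $Pd^Nx$ is a probability measure; this is shorter and is the same trick your own inductive step uses at the level of $\omega$. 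One point to patch in your write-up: the sets $\{x\in A:\gamma(x)\geq1/k\}$ need not be measurable, since the theorem does not assume $\gamma$ measurable; the paper's Remark \ref{rem_int} repairs this by replacing $\gamma(x_N)$ with $\operatorname{ess\,inf}f(\cdot,x_N)$, a measurable choice, after which your reduction (and the selection of a sub-level set of positive finite measure) goes through.
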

\begin{rem}
\label{rem_int}
\rm{The condition on $P$ at the beginning of Theorem 
\ref{Theor3} can be replaced with another, seemingly stronger, but in fact equivalent, assumption.  To be specific, we can assume that    
\begin{equation}
\label{int_cond_equiv}
P\geq \rho^{(N-1)}\otimes \gamma~~\text{a.e. on $\Lambda^{N-1}\otimes A$},  
\end{equation}
%for a.e. $(x_1,...,x_N)\in\Lambda^{N-1}\otimes A$, 
where $A\subset \Lambda$ is some subset of positive measure, and $\gamma: A\rightarrow (0,\infty)$ is 
a measurable function.

Indeed, the condition on $P$ stated in Theorem \ref{Theor3} is equivalent to:
ess inf $f(\cdot, x_N)>0$ for every $x_N\in A$, where $f$ is a measurable function on 
$\Lambda^N$ defined by
\begin{equation}
f=\left \{  
\begin{array}{ll}
%\frac{P(\cdot,x_N)}{\rho^{(N-1)}(\cdot)},&
P\slash (\rho^{(N-1)}\otimes 1)&\text{if 
$\rho^{(N-1)}\otimes 1>0$,}\\
1&\text{if $\rho^{(N-1)}\otimes 1=0$.}
\end{array}
\right .
\end{equation}
Moreover, (\ref{int_cond}) holds with $\gamma(x_N)$ replaced by ess inf $f(\cdot, x_N)$, a measurable function.    
However, if $\gamma$ in (\ref{int_cond}) is $dx$ measurable, 
then $g:= P-\rho^{(N-1)}\otimes \gamma$ is $d^{N} x$ 
measurable.  Let $T:=\{(x_1,...,x_N)\in \Lambda^{N}: 
g(x_1,...,x_N)\geq 0\}$.  Arguing by contradiction, it is easy 
to see that $|\Lambda^{N-1}\otimes A\setminus T | =0$, i.e. 
(\ref{int_cond_equiv}) holds a.e. on $\Lambda^{N-1}\otimes A$.  

}

\end{rem}
\begin{proof}[Proof of Theorem \ref{Theor3}]
The "in particular" part is the direct consequence of Theorem \ref{Theor1} and Corollary \ref{cor1}.  Since there is nothing to prove when $m=N$, we will assume that $1\leq m <N$.   

For the "if" part of the theorem, suppose that $u\in L^{r}(\Lambda^m; d^{m}x)$.  Then,
\begin{equation}
\begin{split}
&|| U ||_{r, P d^{N}x}^r=\int_{\Lambda^N} |G_{N,m} u |^r P
\leq \\
&\binom{N}{m}^{-1} \sum_{1\leq i_1<\cdots<i_m\leq N }
\int_{\Lambda^{N}} | u(x_{i_1},...,x_{i_m}) |^r P dx_1\cdots dx_N=\\
&\int_{\Lambda^{m}} | u |^r \rho^{(m)} = 
|| u ||_{r, \rho^{(m)} d^{m}x}^r.
\end{split}
\end{equation}

For the "only if" part, we will use the condition on $P$, as stated in Remark \ref{rem_int}.  Suppose that $U\in L^{r}(\Lambda^N; P d^{N}x)$.  Since $\gamma$ is positive on $A$, there is 
$\varepsilon >0$ such that the set 
$A_{\varepsilon}:=\{x\in A : \gamma(x) > \varepsilon \} $ has positive measure.  Thus, in view of Lemma 
\ref{Lem_int4}, for every $1\leq m\leq N-1$:
\begin{equation}
\label{P_rho}
\begin{split}
&P(x_{1},...,x_N)\geq \\
&\gamma(x_N)\cdots\gamma(x_{m+1})
\rho^{(m)}(x_1,...,x_m)>\varepsilon^{N-m} \rho^{(m)}(x_1,...,x_m)
\end{split}
\end{equation}
for a.e. $(x_{1},...,x_N)\in \Lambda^{m}\otimes 
A_{\varepsilon}^{N-m}$.  
Note, that $|A_{\varepsilon}|<\infty$ because the integration 
of (\ref{P_rho}) gives $1\geq \int_{\Lambda^m\otimes 
A_{\varepsilon}^{N-m}} P \geq \left (\varepsilon |A_{\varepsilon}|\right )^{N-m}$.  
\begin{lem}
\label{Lem_int5}
Let $\alpha:=\left (\varepsilon |A_{\varepsilon}|\right )^{-1}$.  
For every $1\leq m\leq N-1$, the set 
\begin{equation}
\label{def_T_N-1}
\begin{split}
&T_{N-m}:=
\{(x_{m+1},...,x_N)\in A_{\varepsilon}^{N-m} : \\
&\text{$U(\cdot,x_{m+1},...,x_N)$ is measurable and} \\
&|| U(\cdot,x_{m+1},...,x_N)||_{r,\rho^{(m)} d^{m}x }^r \leq 
\alpha^{N-m} || U ||_{r,Pd^{N}x }^r \}
\end{split}
\end{equation}
is not a set of measure zero.  In particular, it is not empty.  
\end{lem}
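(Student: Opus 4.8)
The plan is to combine the pointwise estimate (\ref{P_rho}) with the Fubini--Tonelli theorem and a first--moment (Markov-type) argument run on the finite--measure set $A_{\varepsilon}^{N-m}$. Write $\beta := \alpha^{N-m}\,||U||_{r,Pd^{N}x}^{r}$ and, for $(x_{m+1},\dots,x_{N})\in A_{\varepsilon}^{N-m}$, set
\[
h(x_{m+1},\dots,x_{N}) := \int_{\Lambda^{m}} |U(\cdot,x_{m+1},\dots,x_{N})|^{r}\,\rho^{(m)}\,d^{m}x ,
\]
which, since $U$ is $d^{N}x$-measurable (being in $L^{r}(\Lambda^{N};Pd^{N}x)$) and $\rho^{(m)}$ is measurable, is itself a measurable function of $(x_{m+1},\dots,x_{N})$ by the Fubini--Tonelli theorem. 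With this notation, $T_{N-m}$ is precisely the set of $(x_{m+1},\dots,x_{N})\in A_{\varepsilon}^{N-m}$ for which the section $U(\cdot,x_{m+1},\dots,x_{N})$ is $d^{m}x$-measurable and $h(x_{m+1},\dots,x_{N})\le\beta$.

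First I would bound $\int_{A_{\varepsilon}^{N-m}} h$. Restricting the integral defining $||U||_{r,Pd^{N}x}^{r}$ to $\Lambda^{m}\otimes A_{\varepsilon}^{N-m}$, invoking the a.e. lower bound $P>\varepsilon^{N-m}(\rho^{(m)}\otimes 1)$ on that set from (\ref{P_rho}), and applying the Fubini--Tonelli theorem give
\[
||U||_{r,Pd^{N}x}^{r}\;\ge\;\int_{\Lambda^{m}\otimes A_{\varepsilon}^{N-m}}|U|^{r}P\;\ge\;\varepsilon^{N-m}\int_{A_{\varepsilon}^{N-m}} h .
\]
Since $\alpha=(\varepsilon|A_{\varepsilon}|)^{-1}$, one has $\varepsilon^{-(N-m)}=\alpha^{N-m}|A_{\varepsilon}|^{N-m}$, and $|A_{\varepsilon}^{N-m}|=|A_{\varepsilon}|^{N-m}<\infty$; hence the previous display rearranges to
\[
\frac{1}{|A_{\varepsilon}^{N-m}|}\int_{A_{\varepsilon}^{N-m}} h\;\le\;\alpha^{N-m}\,||U||_{r,Pd^{N}x}^{r}=\beta ,
\]
and in particular $\int_{A_{\varepsilon}^{N-m}}h<\infty$ because $U\in L^{r}(\Lambda^{N};Pd^{N}x)$.

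Finally I would conclude by the first--moment argument together with Fubini--Tonelli for measurability of sections. If $\{h\le\beta\}\cap A_{\varepsilon}^{N-m}$ were a null set, then $h>\beta\ge 0$ a.e. on $A_{\varepsilon}^{N-m}$, a set of positive and finite measure, so $\int_{A_{\varepsilon}^{N-m}}h>\beta\,|A_{\varepsilon}^{N-m}|$, contradicting the averaged bound above; thus $\{h\le\beta\}\cap A_{\varepsilon}^{N-m}$ is not null. On the other hand, since $U$ is $d^{N}x$-measurable, the Fubini--Tonelli theorem gives that $U(\cdot,x_{m+1},\dots,x_{N})$ is $d^{m}x$-measurable for a.e. $(x_{m+1},\dots,x_{N})$, so the set of points of $A_{\varepsilon}^{N-m}$ with a non-measurable section is null in $A_{\varepsilon}^{N-m}$. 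Removing this null set from the non-null set just obtained still leaves a set of positive measure, and that set is exactly $T_{N-m}$; a set of positive measure is nonempty, which yields the ``in particular'' assertion. The only delicate point is the bookkeeping that makes the constant $\alpha^{N-m}$ come out of (\ref{P_rho}) exactly --- this is what forces the normalization $\alpha=(\varepsilon|A_{\varepsilon}|)^{-1}$ --- together with the fact that the averaging argument must be run on the \emph{finite}-measure set $A_{\varepsilon}^{N-m}$, whose finiteness was established immediately before the lemma and is precisely what lets ``average $\le\beta$'' force a positive-measure sublevel set.
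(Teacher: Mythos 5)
Your proof is correct and is essentially the paper's own argument in contrapositive form: the paper assumes $|T_{N-m}|=0$, integrates the resulting reverse strict inequality over the finite-measure set $A_{\varepsilon}^{N-m}$ and uses (\ref{P_rho}) to contradict $\|U\|_{r,Pd^Nx}^r<\|U\|_{r,Pd^Nx}^r$, which is exactly your averaging/first-moment bound stated directly. Your explicit handling of the a.e. measurability of sections via Fubini--Tonelli is a detail the paper leaves implicit, not a different approach.
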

\begin{proof}
Suppose that $| T_{N-m}|=0$.  This means that 
\begin{equation}
\label{case1_contr1}
|| U ||_{r,Pd^{N}x }^r<\alpha^{m-N} 
\int_{\Lambda^m} | U(\cdot,x_{m+1},...,x_N) |^{r} \rho^{(m)}d^{m} x 
\end{equation}
for a.e. $(x_{m+1},...,x_N)\in A_{\varepsilon}^{N-m} $.  Integration 
over $ A_{\varepsilon}^{N-m}$ in the last inequality, and 
(\ref{P_rho}) give:
\begin{equation}
\begin{split}
\label{case1_contr2}
&|| U ||_{r,Pd^{N}x }^r<\varepsilon^{N-m} 
\int_{\Lambda^m\otimes A_{\varepsilon}^{N-m}} | U(x_1,...,x_N) |^{r} \rho^{(m)}(x_1,...,x_m) d^{N}x\\
&<\int_{\Lambda^m\otimes A_{\varepsilon}^{N-m}} | U |^{r}
P d^{N}x\leq|| U ||_{r,Pd^{N}x }^r, 
\end{split}
\end{equation}
a contradiction.  Thus, $T_{N-m}$ can not be a set of measure zero.  
\end{proof}

\textit{Case 1:}  $m=1$.   
%By Fubini-Tonelli theorenm there is a co-null set $S_{N-1}\subset \Lambda^{N-1}$ such that 
%$| U(\cdot,x_2,...,x_N) |^{r} \rho^{(1)}(\cdot)$ is measurable 
%for every $(x_1,...,x_N)\in S_{N-1}$.  Sincve $T$ is not a 
%set of measure zero, $T\cap S^{N-1}\neq \emptyset$.  
Let us fix $(\tilde{x}_2,...,\tilde{x}_N)\in T_{N-1}$, a non-empty set by Lemma \ref{Lem_int5}.   Then,
\begin{equation}
\label{NU_u}
NU(\cdot,\tilde{x}_2,...,\tilde{x}_N)=u + \sum_{i=2}^N 
u(\tilde{x}_i):=u +\tilde{c}.
\end{equation}
By the definition of the set $T_{N-1}$ given in (\ref{def_T_N-1}),
\begin{equation}
\label{Uhro_UP}
|| U(\cdot,\tilde{x}_2,...,\tilde{x}_N)||_{r,\rho^{(1)} dx } \leq 
\alpha^{\frac{N-1}{r}} || U ||_{r,Pd^{N}x} .  
\end{equation}
To get an estimate on $\tilde{c}$, let us apply $G_{N,1}$ to 
both sides of \ref{NU_u}, with the result 
$\tilde{c}=N G_{N,1} U(\cdot,\tilde{x}_2,...,\tilde{x}_N) - U$.  
Then, the "if" part of the theorem and (\ref{Uhro_UP}) 
imply that 
\begin{equation}
\label{c_est}
\begin{split}
|\tilde{c}|&\leq N || G_{N,1} U
(\cdot,\tilde{x}_2,...,\tilde{x}_N)||_{r,Pd^{N}x} + 
|| U ||_{r,Pd^{N}x}\\
&\leq N || U
(\cdot,\tilde{x}_2,...,\tilde{x}_N)||_{r,\rho^{(1)}dx} + 
|| U ||_{r,Pd^{N}x}\\
&\leq \left [ 
N \alpha^{\frac{N-1}{r}}
+1 \right ]
|| U ||_{r,Pd^{N}x}.
\end{split}
\end{equation}
Finally, we infer from (\ref{NU_u}), (\ref{Uhro_UP}), and (\ref{c_est}) that 
\begin{equation}
\begin{split}
&|| u ||_{r,\rho^{(1)}dx} \leq N || U(\cdot,\tilde{x}_2,...,\tilde{x}_N)||_{r,\rho^{(1)}dx} + |\tilde{c}|\\
&\leq \left (
2N \alpha^{\frac{N-1}{r}} 
+1 \right ) || U ||_{r,Pd^{N}x}.  
\end{split}
\end{equation}
Note that the constant in the round brackets depends on 
$P$ through $\alpha$.  

\textit{Case 2:}  $2\leq m < N$.  Similarly to the proofs for this case in the 
previous two theorems, we will use induction on 
$m$.  Suppose that the "only if" statement of the theorem 
holds for $m-1$.  Let us fix $(\tilde{x}_{m+1},...,\tilde{x}_N)\in 
T_{N-m}$, a non-empty set by Lemma \ref{Lem_int5}.  As was 
shown previously, (see (\ref{a.e.convm_1} - \ref{a.e.convm_3}) ), there is $\omega:\Lambda^{m-1}\rightarrow \Bbb{R}$ such that 
\begin{equation}
\label{int_mg2_1}
\binom{N}{m} U(\cdot,\tilde{x}_{m+1},...,\tilde{x}_{N})=
u + G_{m,m-1} \omega.  
\end{equation}
By the definition of the set $T_{N-m}$ given in (\ref{def_T_N-1}),
\begin{equation}
\label{Uhrom_UP}
|| U(\cdot,\tilde{x}_{m+1},...,\tilde{x}_N)||_{r,\rho^{(m)} d^{m}x } \leq 
\alpha^{\frac{N-m}{r}} || U ||_{r,Pd^{N}x} .  
\end{equation}
An estimate on $|| G_{m,m-1}\omega ||_{r,\rho^{(m)} d^{m}x }$ will follow by induction.  Applying $G_{N,m}$ to both 
sides of (\ref{int_mg2_1}) and using Lemma \ref{Lem2} yield:
\begin{equation}
G_{N,m-1}\omega = \binom{N}{m} 
G_{N,m} U(\cdot,\tilde{x}_{m+1},...,\tilde{x}_{N})-U.
\end{equation}
The last equation shows that $G_{N,m-1}\omega$ is 
measurable.  In addition, using the "if" statement of the 
theorem and (\ref{Uhrom_UP}), we estimate:
\begin{equation}
\label{Gw_est}
\begin{split}
||G_{N,m-1}\omega||_{r,Pd^{N}x}&\leq \binom{N}{m} 
|| G_{N,m} U
(\cdot,\tilde{x}_{m+1},...,\tilde{x}_N)||_{r,Pd^{N}x} + 
|| U ||_{r,Pd^{N}x}\\
&\leq \binom{N}{m} || U
(\cdot,\tilde{x}_{m+1},...,\tilde{x}_N)||_{r,\rho^{(m)}d^{m}x} + 
|| U ||_{r,Pd^{N}x}\\
&\leq \left [ 
\binom{N}{m} \alpha^{\frac{N-m}{r}}
+1 \right ]
|| U ||_{r,Pd^{N}x}.  
\end{split}
\end{equation}
The last inequality allows us to conclude from the induction 
hypothesis that $\omega\in L^{r}(\Lambda^{m-1}; \rho^{(m-1)} d^{m-1}x)$, and
\begin{equation}
\label{w_est}
|| \omega ||_{r,\rho^{(m-1)} d^{m-1} x} \leq \tilde{C} 
\left [ 
\binom{N}{m} \alpha^{\frac{N-m}{r}}
+1 \right ] || U ||_{r,Pd^{N}x}  
\end{equation}
for some constant $\tilde{C}=\tilde{C}(N,m-1,r,P)$.  
Using the "if" statement of the theorem one more time, we 
infer that $G_{m,m-1}\omega \in L^{r}(\Lambda^{m}; \rho^{(m)} d^{m}x)$,and $|| G_{m,m-1}\omega ||_{r,\rho^{(m)} d^{m}x}\leq ||\omega||_{r,\rho^{(m-1)} d^{m-1}x}$.  This inequality, together with (\ref{int_mg2_1}), (\ref{Uhrom_UP}), 
and (\ref{w_est}), finally give that 
$u\in L^{r}(\Lambda^{m}; \rho^{(m)} d^{m}x)$, and
\begin{equation}
\begin{split}
&|| u ||_{r,\rho^{(m)}d^{m}x} \leq \binom{N}{m} || U(\cdot,\tilde{x}_{m+1},...,\tilde{x}_N)||_{r,\rho^{(m)}dx} +\\
& || G_{m,m-1}\omega ||_{r,\rho^{(m)}dx}
\leq \left [
\binom{N}{m} \alpha^{\frac{N-m}{r}} (1+\tilde{C}) +  
\tilde{C} \right ] || U ||_{r,Pd^{N}x}.  
\end{split}
\end{equation}
\end{proof}

The next theorem shows that any symmetric probability 
density on $\Lambda^N$ can be approximated in $L^{1}(\Lambda^{N}; d^{N}x)$ by an arbitrarily close symmetric probability 
density satisfying condition (\ref{int_cond}).  Moreover, if 
measure $dx$ is finite and $P\in L^{\infty}(\Lambda^{N}; d^{N}x)$, then this approximation is in $L^{\infty}(\Lambda^{N}; d^{N}x)$.  
\begin{theor}
\label{prob_est}
If $N\geq 2$ and $P$ is a symmetric probability density on 
$\Lambda^N$, there is a sequence $(P_n)$ of 
symmetric probability densities on $\Lambda^N$ such that 
$P_n$ satisfies (\ref{int_cond}), and $P_n\rightarrow P$ in 
$L^{1}(\Lambda^N; d^{N}x)$.  If, in addition, measure $dx$ is finite, and $P$ is essentially bounded, then 
$P_n\rightarrow P$ in $L^{\infty}(\Lambda^N; d^{N}x)$.
\end{theor}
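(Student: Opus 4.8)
The plan is to exhibit, for each sufficiently large $n$, a symmetric probability density $P_n$ on $\Lambda^{N}$ that satisfies $(\ref{int_cond})$, with $P_n\to P$ in $L^{1}(\Lambda^{N};d^{N}x)$ and, when $dx$ is finite and $P$ is essentially bounded, also $P_n\to P$ in $L^{\infty}(\Lambda^{N};d^{N}x)$. The first step is a reduction to a density that is simultaneously essentially bounded and supported on a set of finite measure. Since $dx$ is $\sigma$-finite, fix an increasing sequence $B_1\subseteq B_2\subseteq\cdots$ of measurable subsets of $\Lambda$ with $|B_n|<\infty$ and $\bigcup_n B_n=\Lambda$, choosing $B_n=\Lambda$ for all $n$ in the case $|\Lambda|<\infty$. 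Put $Q_n:=P\cdot\mathbf 1_{B_n^{N}}\cdot\mathbf 1_{\{P\le n\}}$; this is nonnegative, symmetric (a product of the symmetric functions $P$, $\mathbf 1_{B_n^{N}}$, $\mathbf 1_{\{P\le n\}}$), bounded by $n$, and supported in $B_n^{N}$. Since $0\le Q_n\le P$ and $Q_n\to P$ a.e., dominated convergence gives $\|Q_n-P\|_{1,d^{N}x}\to 0$, hence $c_n:=\int_{\Lambda^{N}}Q_n\,d^{N}x\to 1$; discarding finitely many terms, we may assume $c_n>0$ and $|B_n|>0$. Set $\tilde P_n:=Q_n/c_n$ — a symmetric probability density, bounded by $n/c_n$, supported in $B_n^{N}$, with $\tilde P_n\to P$ in $L^{1}(\Lambda^{N};d^{N}x)$.

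Next I perturb by the uniform density on the box:
\[
P_n:=\Bigl(1-\tfrac1n\Bigr)\tilde P_n+\tfrac1n\,|B_n|^{-N}\,\mathbf 1_{B_n^{N}}.
\]
Then $P_n$ is again a symmetric probability density and $\|P_n-\tilde P_n\|_{1,d^{N}x}\le 2/n$, so $P_n\to P$ in $L^{1}(\Lambda^{N};d^{N}x)$. To verify $(\ref{int_cond})$ for $P_n$, let $\rho_n$ denote its $(N-1)$-marginal. Integrating out the last variable in the bound $\tilde P_n\le(n/c_n)\mathbf 1_{B_n^{N}}$ shows that the $(N-1)$-marginal of $\tilde P_n$ is $\le(n|B_n|/c_n)\mathbf 1_{B_n^{N-1}}$; likewise the $(N-1)$-marginal of $|B_n|^{-N}\mathbf 1_{B_n^{N}}$ equals $|B_n|^{-(N-1)}\mathbf 1_{B_n^{N-1}}$. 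Hence $\rho_n$ is supported in $B_n^{N-1}$ and $\rho_n\le C_n$ a.e.\ for some finite constant $C_n$. On the other hand, $P_n(x_1,\dots,x_N)\ge\tfrac1n|B_n|^{-N}\,\mathbf 1_{B_n^{N-1}}(x_1,\dots,x_{N-1})\,\mathbf 1_{B_n}(x_N)$. Consequently, for every $x_N\in B_n$ and a.e.\ $(x_1,\dots,x_{N-1})\in\Lambda^{N-1}$, either $(x_1,\dots,x_{N-1})\in B_n^{N-1}$, in which case
\[
P_n(x_1,\dots,x_N)\ \ge\ \tfrac1n|B_n|^{-N}\ =\ \bigl(n|B_n|^{N}C_n\bigr)^{-1}C_n\ \ge\ \bigl(n|B_n|^{N}C_n\bigr)^{-1}\rho_n(x_1,\dots,x_{N-1}),
\]
or $(x_1,\dots,x_{N-1})\notin B_n^{N-1}$, in which case $\rho_n(x_1,\dots,x_{N-1})=0\le P_n(x_1,\dots,x_N)$. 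So $(\ref{int_cond})$ holds for $P_n$ with $A:=B_n$ (a set of positive measure) and $\gamma(x_N):=\bigl(n|B_n|^{N}C_n\bigr)^{-1}>0$. For the $L^{\infty}$ assertion, suppose $dx$ is finite and $P\in L^{\infty}(\Lambda^{N};d^{N}x)$. By the choice above, $B_n=\Lambda$ for all $n$, and for $n\ge\|P\|_{\infty,d^{N}x}$ one has $\{P\le n\}=\Lambda^{N}$ up to a null set, so $Q_n=P$, $c_n=1$, $\tilde P_n=P$, and therefore $P_n=\bigl(1-\tfrac1n\bigr)P+\tfrac1n|\Lambda|^{-N}$; hence $\|P_n-P\|_{\infty,d^{N}x}\le\tfrac1n\bigl(\|P\|_{\infty,d^{N}x}+|\Lambda|^{-N}\bigr)\to 0$.

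The step I expect to be the real obstacle is not any of these computations but finding a construction that works at all. The tempting choice $P_n=(1-\varepsilon_n)P+\varepsilon_n Q_n$ with $Q_n$ the symmetrization of $\rho^{(N-1)}(x_1,\dots,x_{N-1})g(x_N)$ for some probability density $g$ on $\Lambda$ does succeed when $N=2$ but fails for $N\ge 3$, because symmetrizing forces terms governed by $\rho^{(N-2)}$ into the $(N-1)$-marginal of $Q_n$, and then checking $(\ref{int_cond})$ for $P_n$ would require $\rho^{(N-1)}$ to obey the $(N-1)$-variable analogue of $(\ref{int_cond})$ — a requirement that cascades down the marginal hierarchy and generally breaks (for instance when $P$ concentrates near a diagonal). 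Truncating $P$ in value and in support before perturbing sidesteps this, since a bounded density supported in $B^{N}$ has a bounded marginal supported in $B^{N-1}$, so the constant floor contributed by the mixture automatically dominates that marginal.
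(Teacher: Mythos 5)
Your proof is correct, and it runs on the same underlying mechanism as the paper's — force the perturbed density to be bounded below by a positive constant on a region of finite measure on which its own $(N-1)$-marginal is bounded above, so that (\ref{int_cond}) holds with $A$ equal to that region and $\gamma$ a constant — but the construction and organization differ. The paper first reduces to the case $|\Lambda|<\infty$, $P$ essentially bounded (approximating $P$ in $L^{1}$ by normalized truncations built from $\min\{P,n\}$ and the sets $E_n$), and in that case takes $P_n=\max\{P,\tfrac1n\}\big/\,\big\|\max\{P,\tfrac1n\}\big\|_{1,d^{N}x}$, checking $P_n\geq (n+|\Lambda|^{N})^{-1}$ and $\rho_n^{(N-1)}\leq c+\tfrac1n|\Lambda|$, so that (\ref{int_cond}) holds with $A=\Lambda$; the $L^{\infty}$ estimate is then immediate because the perturbation changes $P$ only where $P<\tfrac1n$, apart from the renormalization. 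You instead truncate $P$ in value and in support and take a convex mixture with the uniform density on $B_n^{N}$: the uniform component supplies the constant floor, the truncation supplies a bounded marginal supported in $B_n^{N-1}$, and you verify (\ref{int_cond}) directly in the general $\sigma$-finite setting with $A=B_n$. This one-shot construction avoids the paper's two-stage reduction (which, as written, tacitly requires applying the finite/bounded case on each $E_n$, transferring (\ref{int_cond}) back to $\Lambda$, and extracting a sequence), at the price of a slightly less direct $L^{\infty}$ statement, which you recover correctly by noting that for $n\geq\|P\|_{\infty,d^{N}x}$ and $B_n=\Lambda$ the truncation is trivial and $P_n=(1-\tfrac1n)P+\tfrac1n|\Lambda|^{-N}$ a.e. All the individual steps (symmetry of the truncations, dominated convergence for $c_n\to1$, the bound $\rho_n\leq C_n$ on $B_n^{N-1}$ and $\rho_n=0$ off it, and the $2/n$ bound on the mixing error) check out, so the argument is complete; your closing remark about why mixing with a symmetrized $\rho^{(N-1)}\otimes g$ fails for $N\geq3$ is apt commentary but not needed for the proof.
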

\begin{proof}
It suffices to prove the theorem when measure $dx$ is finite, and $P$ is essentially bounded. Indeed, since $(\Lambda; dx)$ is $\sigma$-finite, $\Lambda=\cup_{n=1}^{\infty} E_n$, where $|E_n|<\infty$ $\forall n$, and $E_{n}\subset E_{n+1}$.  Then, by dominated convergence, any symmetric probability density $P$ can be approximated in 
$L^{1}(\Lambda^N; d^{N}x)$ by a sequence of 
symmetric probability densities $P_n\chi_{E_n}/|| P_n\chi_{E_n}||_{1,d^{N}x}$, where $\chi_{E_n}$ is the characteristic 
function of the set $E_n$, and $P_n=\min\{P,n\}$.  

In accordance with the above comment, suppose that 
$|\Lambda|<\infty$, and $P$ is essentially bounded. 
Then, there is $c>0$ such that $\rho^{(N-1)}\leq c $ a.e. on 
$\Lambda^{N-1}$.  
If $Q_n:=\max\{P,\frac{1}{n} \}$, then $P_n:=\frac{Q_n}{|| Q_n ||_{1,d^{N}x}} $ is a symmetric probability density on 
$\Lambda^N$, and $P_n\rightarrow P$ in $L^{1}(\Lambda^N; d^{N}x)$ by dominated convergence.  In addition, since $-\frac{1}{n}\leq  P-P_n 
\leq || P ||_{\infty, d^{N}x}\left(1 -1\slash || Q_n ||_{1,d^{N}x}\right )$ on some co-null set for $n$ large enough, $P_n\rightarrow P$ in $L^{\infty}(\Lambda^N; d^{N}x)$.  

It remains to check that $P_n$ satisfies (\ref{int_cond}).  For this, we notice that 
\begin{equation}
\label{Pest_1}
P_n\geq \frac{1}{n || Q_n ||_{1,d^{N}x}} \geq 
\frac{1}{n + |\Lambda|^{N} }.
\end{equation}
Also, a.e. on $\Lambda^{N-1}$:
\begin{equation}
\label{Pest_2}
\begin{split}
&\rho_n^{(N-1)}= 
\frac{\int_{\Lambda} Q_n (\cdot,x_N)dx_N}{|| Q_n ||_{1,d^{N}x}}\leq 
\frac{\rho^{(N-1)} + \frac{1}{n} |\Lambda|}{|| Q_n ||_{1,d^{N}x}} \\
&\leq \rho^{(N-1)} + \frac{1}{n} |\Lambda| 
\leq c+ \frac{1}{n} |\Lambda|.
\end{split}
\end{equation}
From (\ref{Pest_1}) and (\ref{Pest_2}) it follows that 
for every $x_N\in \Lambda$
\begin{equation}
P_n(\cdot,x_N)\geq \alpha_n \rho_n^{(N-1)}(\cdot)~~ \text{
a.e on $\Lambda^{N-1}$},
\end{equation}
with $\alpha_n:=\left [ \left (n+ |\Lambda|^N\right )\left (c+\frac{1}{n} 
|\Lambda | \right )\right ]^{-1}$.  
Thus, (\ref{int_cond}) is satisfied by $P_n$ with 
$A=\Lambda$, and $\gamma\equiv \alpha_n$.
\end{proof}

%\begin{acknowledgements}
\section*{Acknowledgements}

The author is deeply indebted to Patrick J. Rabier for his generous contributions to this work.  The author is glad to express her gratitude to William Noid for first introducing her to the inverse problem.  

%\end{acknowledgements}

%
%
% BibTeX users please use
% \bibliographystyle{aip}
% \bibliographystyle{unsrt}
%\bibliography{InverseProblemFV_06.23}
%
% Non-BibTeX users please use

\end{document}